\newtheorem{thm}{Theorem}[section]
\newtheorem{lem}[thm]{Lemma}
\newtheorem{cor}[thm]{Corollary}
\newtheorem{prop}[thm]{Proposition}
\newtheorem{ques}[thm]{Question}
\theoremstyle{definition}
\newtheorem{rem}[thm]{Remark}
\newtheorem{exmp}[thm]{Example}
\numberwithin{equation}{section}
\newcommand{\be}{\begin{equation}}
\newcommand{\ee}{\end{equation}}
\def\CC{\mathbb{C}}
\def\NN{\mathbb{N}}
\def\RR{\mathbb{R}}
\def\L{{\mathcal L}}
\def\P{{\mathcal P}}
\def\dd{{\mathrm{d}}}
\def\uX{{\underline{X}}}
\newcommand{\Pos}{\mathrm{Pos}}
\newcommand{\supp}{\mathrm{supp}}
\renewcommand{\sp}{\mathfrak{sp}}
\newcommand{\nn}{\mathcal{S}}
\definecolor{violet(web)}{rgb}{0.93, 0.51, 0.93}
\title[A characterization of moment functionals in the compact case]{\bf An intrinsic characterization of\\  moment functionals in the compact case}
\author[Infusino, Kuhlmann, Kuna, Michalski]{Maria Infusino, Salma Kuhlmann, Tobias Kuna, Patrick Michalski}
\address[M. Infusino]{\newline \indent Dipartimento di Matematica e Informatica, Universit\`{a} degli Studi di Cagliari,\newline \indent
Via Ospedale 72,
09124 Cagliari, Italy.}
\email{maria.infusino@unica.it}
\address[S. Kuhlmann, P. Michalski]{\newline \indent Fachbereich Mathematik und Statistik, Universit\"at Konstanz,\newline \indent
Universit\"atstrasse 10,
78457 Konstanz, Germany.}
\email{salma.kuhlmann@uni-konstanz.de}
\email{patrick.michalski@uni-konstanz.de}
\address[T. Kuna]{\newline \indent Dipartimento di Ingegneria, Scienze dell'Informazione e Matematica,\newline \indent Universit\'a degli Studi dell'Aquila, Via Vetoio, 67100, L'Aquila,
Italy}
\email{tobias.kuna@posteo.net}
\subjclass[2020]{Primary 44A60, 28C20, 28C05}
\keywords{moment problem, representing measure, compact support, submultiplicative seminorm, Archimedean quadratic module}
\date{\today}
\begin{document}
\begin{abstract}
We consider the class of all linear functionals~$L$ on a unital commutative real algebra~$A$ that can be represented as an integral w.r.t.\ to a Radon measure with compact support in the character space of~$A$.
Exploiting a recent generalization of the classical Nussbaum theorem, we establish a new characterization of this class of moment functionals solely in terms of a growth condition intrinsic to the given linear functional.
To the best of our knowledge, our result is the first to exactly identify the compact support of the representing Radon measure.
We also describe the compact support in terms of the largest Archimedean quadratic module on which~$L$ is non-negative and in terms of the smallest submultiplicative seminorm w.r.t. which~$L$ is continuous.
Moreover, we derive a formula for computing the measure of each singleton in the compact support, which in turn gives a necessary and sufficient condition for the support to be a finite set.
Finally, some aspects related to our growth condition for topological algebras are also investigated.
\end{abstract}

\maketitle

\section{Introduction}

In this article we investigate the following instance of the moment problem for a unital commutative (not necessarily finitely generated) $\RR$--algebra~$A$. 
We always assume that the \emph{character space} of~$A$, i.e., the set~$X(A)$ of all $\RR$--algebras homomorphisms from~$A$ to~$\RR$, is non-empty and we endow~$X(A)$ with the weakest (Hausdorff) topology~$\tau_{X(A)}$ such that for each~$a\in A$ the function~$\hat{a}\colon X(A)\to\RR$, $\alpha\mapsto\alpha(a)$ is continuous. Our main question is the following.

\begin{ques}
\thlabel{MP}
Let $A$ be a unital commutative $\RR$--algebra with~$X(A)$ non-empty. 
Given a linear functional $L\colon A\to\RR$ with~$L(1)=1$, does there exist a Radon measure~$\nu$ on~$X(A)$ with
\begin{equation}\label{MP::eq1}
L(a)=\int_{X(A)}\hat{a}(\alpha)\dd\nu(\alpha)\qquad\text{for all }a\in A
\end{equation}
such that the support of~$\nu$ is \emph{compact}?
\end{ques}

\noindent
If a Radon measure~$\nu$ as in~\eqref{MP::eq1} does exist, then we call~$\nu$ a \emph{representing Radon measure for~$L$} and we say that~$L$ is a \emph{moment functional}. 
In fact, if the support of a representing Radon measure is compact, then the representation in~\eqref{MP::eq1} is unique (see~\cite[Section~3.3]{Mar08}).
We recall that a \emph{Radon measure}~$\nu$ on~$X(A)$ is a non-negative measure on the Borel~$\sigma$--algebra w.r.t.\ $\tau_{X(A)}$ that is locally finite and inner regular w.r.t.\ compact subsets of $X(A)$. 
The \emph{support of~$\nu$}, denoted by~$\supp(\nu)$, is the smallest closed subset~$C$ of~$X(A)$ for which $\nu(X(A)\setminus C)=0$ holds.\par\medskip

\noindent
We give a complete characterization of linear functionals that admit representing Radon measures with compact support in terms of a new growth condition (see~\eqref{thm::MP<->growth::cond}) intrinsic to the given linear functional. 
Since this growth condition implies Carleman's condition, we can use the recent general version of the classical Nussbaum theorem in~\cite[Theorem~3.17]{InKuKuMi20} to establish the existence of a unique representing Radon measure. 
The main novelty is that our growth condition surprisingly allows us to exactly identify the compact support of the representing Radon measure (see \eqref{thm::MP<->growth::eq1}). 
More precisely, we establish the following main result in Section~\ref{sec::char}.

\begin{thm}
\thlabel{thm::MP<->growth}
Let $L\colon A\to\RR$ be linear with~$L(A^2)\subseteq [0,\infty)$ and~$L(1)=1$.
Then there exists a unique representing Radon measure~$\nu_L$ for~$L$ with compact support if and only if 
\begin{equation}\label{thm::MP<->growth::cond}
\sup_{n\in\NN}\sqrt[2n]{L(a^{2n})}<\infty\text{ for all }a\in A.
\end{equation}
Moreover, in this case, 
\begin{equation}
\label{thm::MP<->growth::eq1}
\supp(\nu_L)=\{\alpha\in X(A):\left|\alpha(a)\right|\leq \sup_{n\in\NN}\sqrt[2n]{L(a^{2n})}\text{ for all }a\in A\}.
\end{equation}
\end{thm}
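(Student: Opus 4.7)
The \emph{only if} direction is immediate: if $\nu$ is a representing Radon measure with compact support~$C$, then each $\hat{a}$ is continuous on~$C$ and hence bounded by some $M_a<\infty$, so
\begin{equation*}
L(a^{2n})=\int_C \hat{a}(\alpha)^{2n}\,\dd\nu(\alpha)\leq M_a^{2n}\,\nu(C)=M_a^{2n},
\end{equation*}
using $\nu(C)=L(1)=1$, whence $\sqrt[2n]{L(a^{2n})}\leq M_a$ for all $n$. For the converse, set $M_a:=\sup_{n\in\NN}\sqrt[2n]{L(a^{2n})}$ and note that \eqref{thm::MP<->growth::cond} immediately implies Carleman's condition $\sum_n L(a^{2n})^{-1/(2n)}=\infty$ (the summands are bounded below by $1/M_a>0$); hence the generalized Nussbaum theorem \cite[Theorem~3.17]{InKuKuMi20} provides a unique representing Radon measure~$\nu_L$ for~$L$. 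It then suffices to identify $\supp(\nu_L)$ with the set $K$ on the right-hand side of~\eqref{thm::MP<->growth::eq1}, since compactness of the support will follow automatically from compactness of~$K$.

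I would first check that $K$ is compact by using the standard embedding $\Phi\colon X(A)\hookrightarrow\RR^A$, $\alpha\mapsto(\hat{a}(\alpha))_{a\in A}$, which is a homeomorphism onto its closed image; this identifies~$K$ with $\Phi(X(A))\cap\prod_{a\in A}[-M_a,M_a]$, a closed subset of a Tychonoff-compact product. To prove $\supp(\nu_L)\subseteq K$, I would argue by contradiction: if $\alpha_0\in\supp(\nu_L)$ satisfies $|\hat{a}(\alpha_0)|>M_a$ for some~$a$, then by continuity $|\hat{a}|\geq M_a+\varepsilon$ on some neighbourhood~$U$ of~$\alpha_0$ with $\nu_L(U)>0$, and the estimate $L(a^{2n})\geq (M_a+\varepsilon)^{2n}\nu_L(U)$ forces $\liminf_n\sqrt[2n]{L(a^{2n})}\geq M_a+\varepsilon>M_a$, a contradiction. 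As a bonus this step yields the identity $M_a=\sup_{\alpha\in\supp(\nu_L)}|\hat{a}(\alpha)|$, the reverse inequality following at once from $\supp(\nu_L)\subseteq K$.

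The hardest step is the reverse inclusion $K\subseteq\supp(\nu_L)$, which I would handle via Stone--Weierstrass. Suppose for contradiction that $\alpha_0\in K\setminus\supp(\nu_L)$. Since $X(A)$ is Hausdorff and $\supp(\nu_L)$ is compact by the previous step, $\{\alpha_0\}$ is clopen in the compact Hausdorff subspace $\supp(\nu_L)\cup\{\alpha_0\}$. The restricted algebra $\{\hat{a}|_{\supp(\nu_L)\cup\{\alpha_0\}}:a\in A\}$ contains the constants (as $\hat{1}=1$) and separates points (distinct characters differ on some element of $A$), so by Stone--Weierstrass it is sup-norm dense in $C(\supp(\nu_L)\cup\{\alpha_0\})$. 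Approximating the continuous function with value~$2$ at~$\alpha_0$ and~$0$ on $\supp(\nu_L)$ to within~$1/2$ produces $a\in A$ with $|\hat{a}(\alpha_0)|>3/2$ and $\sup_{\supp(\nu_L)}|\hat{a}|<1/2$; by the identity just established this means $M_a<1/2<|\hat{a}(\alpha_0)|$, contradicting $\alpha_0\in K$. The main obstacle is thus to carefully set up this Stone--Weierstrass separation and verify the clopen structure on $\supp(\nu_L)\cup\{\alpha_0\}$; the surrounding existence and uniqueness claims come essentially for free from the cited Nussbaum-type theorem.
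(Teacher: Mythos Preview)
Your proof is essentially sound, but it diverges from the paper in two places worth flagging.

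\textbf{Existence of $\nu_L$.} Your appeal to \cite[Theorem~3.17]{InKuKuMi20} is too quick. That theorem is a projective-limit result: part~(i) gives, from Carleman's condition, a unique representing measure $\nu_S$ on $X(S)$ for each \emph{finitely generated} subalgebra $S\subseteq A$, but passing to a measure on the full $X(A)$ via part~(iii) requires verifying Prokhorov's condition for the family $\{\nu_S\}$. The paper does this explicitly by showing, via the same Chebyshev estimate you use later, that $\nu_S$ concentrates on the compact set $K^{(S)}=\{\alpha\in X(S):|\alpha(a)|\le C_a\ \text{for all }a\in S\}$. So your argument is not wrong, but you are hiding a step whose proof is essentially the one you give for $\supp(\nu_L)\subseteq K$; you should recycle that Chebyshev bound at the level of finitely generated subalgebras before invoking the theorem.

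\textbf{Reverse inclusion $K\subseteq\supp(\nu_L)$.} Here your route is genuinely different and works. The paper first proves the identity $p_L=\|\cdot\|_{\supp(\nu_L)}$ (your $M_a=\sup_{\supp(\nu_L)}|\hat a|$) and then deduces $K_L=\sp(p_L)=\sp(\|\cdot\|_{\supp(\nu_L)})=\supp(\nu_L)$ from the general fact $\sp(\|\cdot\|_K)=K$ (\thref{prop::sp-vs-max}), whose proof is a direct basis-of-topology argument not involving Stone--Weierstrass. Your approach instead exploits compactness of $\supp(\nu_L)\cup\{\alpha_0\}$ and Stone--Weierstrass to manufacture a separating element. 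Both arguments are short; the paper's version has the advantage of yielding the additional characterizations $\supp(\nu_L)=\sp(p_L)=\nn(Q_L)$ as a by-product, while yours is more self-contained and avoids developing the seminorm/Gelfand-spectrum machinery. Note that your ``clopen'' observation is just the statement that $\alpha_0$ is an isolated point of $\supp(\nu_L)\cup\{\alpha_0\}$, which is immediate since $\supp(\nu_L)$ is closed and does not contain $\alpha_0$.
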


\noindent
To the best of our knowledge, \eqref{thm::MP<->growth::eq1} is the first exact and explicit description of the compact support of the representing Radon measure $\nu_L$. 
Indeed, linear functionals that admit representing Radon measures with compact support have already been studied, mostly in relation to their non-negativity on Archimedean quadratic modules (see~\cite{Jac01} and also~\cite{Cas84,Cha88,JaPr01,Mar03,Put93,Smu91}) and more recently in relation to their continuity w.r.t.\ submultiplicative seminorms (see~\cite{GhKuMa14} and also~\cite{BeCh76,GhKu13,GhKuSa13,GhMaWa14,Las13,LaNe07}). 
However, those results focus on the existence of a representing Radon measure while the support is only shown to be contained in a compact set associated with the considered quadratic module resp.\ submultiplicative seminorm. 
 
In Sections~\ref{sec::growth}, \ref{sec::continuity}, and~\ref{sec::positivity}, we analyze the equivalence of our growth condition~\eqref{thm::MP<->growth::cond}, the positivity condition in~\cite{Jac01}, and the continuity condition in~\cite{GhKuMa14} independently of the representing Radon measure $\nu_L$.
This structural analysis allows us, in Section~\ref{sec::support}, to establish \eqref{thm::MP<->growth::eq1} and to characterize $\supp(\nu_L)$ in terms of the largest Archimedean quadratic module on which~$L$ is non-negative as well as in terms of the smallest submultiplicative seminorm  w.r.t.\! which $L$ is continuous (see~\thref{char::compact::support}). 
For the convenience of the reader, we collect in \thref{char::compact} all the above mentioned equivalent conditions and the  characterizations of $\supp(\nu_L)$. 
In Section~\ref{sec::support}, we also present an explicit formula for computing the measure of singletons in $\supp(\nu_L)$ (see \thref{thm::thm}). 
From this result, we derive a sufficient condition for $\supp(\nu_L)$ to be countable (see~\thref{thm::thm::cor1}) as well as a
necessary and sufficient condition for $\supp(\nu_L)$ to be finite (see~\thref{thm::thm::cor2}). 

In Section~\ref{sec::natural}, we construct and compare two locally convex topologies on~$A$ closely related to the growth condition~\eqref{thm::MP<->growth::cond} and compatible with the algebraic structure of~$A$ (see~\thref{prop::can-top1,prop::can-top2}).
In Section~\ref{sec::generators}, we show that if $A$ is endowed with a locally convex topology belonging to a certain class, then
assuming the growth condition \eqref{thm::MP<->growth::eq1} only on the generating elements of a dense subalgebra of $A$ is sufficient for the existence of $\nu_L$ (see~\thref{thm-gen}).

\section{Preliminaries}

In this section we collect some fundamental concepts, notations, and results which we will repeatedly use in the following.\par\medskip

\noindent
Throughout this article $A$ denotes a unital commutative $\RR$--algebra with non-empty character space. Recall that the topology~$\tau_{X(A)}$ is Hausdorff and that the collection of sets of the form
\begin{equation}\label{eq::basis-weak-top}
U(a):=\{\alpha\in X(A): \hat{a}(\alpha)>0\}\qquad\text{with }a\in A
\end{equation}
is a basis of~$\tau_{X(A)}$ (see~\cite[Section~2.1]{InKuKuMi20} for details).\par\medskip 

A subset~$Q\subseteq A$ is a \emph{quadratic module (in~$A$)} if $1\in Q$, $Q+Q\subseteq Q$, and~$A^2Q\subseteq Q$.
The set $\sum A^2$ of all finite sums of squares of elements in~$A$ is the smallest quadratic module in~$A$. 
If in addition for each~$a\in A$ there exists~$N\in\NN$ such that~$N\pm a\in Q$, then $Q$ is \emph{Archimedean}.
The \emph{non-negativity set} of a quadratic module~$Q$ in~$A$ is defined as
$$
\nn(Q):=\{\alpha\in X(A): \hat{a}(\alpha)\geq 0\text{ for all }a\in Q\}\subseteq X(A), 
$$
which is closed. 
If~$Q$ is Archimedean, then~$\nn(Q)$ is compact (see, e.g.,~\cite[Theorem~5.7.2]{Mar08}) while the converse is false in general (see~\cite{JaPr01}).
Given~$C\subseteq X(A)$ closed, the set
$$
\Pos(C):=\{a\in A\colon \hat{a}(\alpha)\geq 0\text{ for all }\alpha\in C\}
$$
is a quadratic module, which is Archimedean if~$C$ is compact.

\begin{prop}
\thlabel{prop::nn-vs-Pos}
The following statements hold:
\begin{enumerate}[(i)]
	\item\label{prop::nn-vs-Pos::1}
	$\nn(\Pos(C))=C$ for all closed $C\subseteq X(A)$.
	\item\label{prop::nn-vs-Pos::2}
	$Q\subseteq\Pos(\nn(Q))$ for all quadratic modules~$Q$ in~$A$.
\end{enumerate}
\end{prop}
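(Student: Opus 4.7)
The plan is to view this proposition as a standard Galois-type correspondence between closed subsets of $X(A)$ and quadratic modules in $A$, with $\nn$ and $\Pos$ as the order-reversing pair. Both (ii) and one half of (i) will follow by directly unwinding the definitions; the only substantive ingredient is the description of $\tau_{X(A)}$ via the basis $\{U(a):a\in A\}$ recalled in~\eqref{eq::basis-weak-top}.

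For (ii), I would simply pick $a\in Q$ and $\alpha\in\nn(Q)$; the definition of $\nn(Q)$ immediately yields $\hat{a}(\alpha)\geq 0$, which says exactly that $a\in\Pos(\nn(Q))$.

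For (i), I would prove the two inclusions separately. The inclusion $C\subseteq\nn(\Pos(C))$ is analogous to (ii): for $\alpha\in C$ and $a\in\Pos(C)$, the definition of $\Pos(C)$ directly gives $\hat{a}(\alpha)\geq 0$, hence $\alpha\in\nn(\Pos(C))$. For the reverse inclusion $\nn(\Pos(C))\subseteq C$, the natural approach is a contrapositive separation argument: assuming $\alpha\in X(A)\setminus C$, the openness of $X(A)\setminus C$ together with the basis property of $\{U(b):b\in A\}$ produces some $b\in A$ with $\alpha\in U(b)\subseteq X(A)\setminus C$, i.e.\ $\hat{b}(\alpha)>0$ while $\hat{b}(\beta)\leq 0$ for every $\beta\in C$. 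Setting $a:=-b$ then yields $a\in\Pos(C)$ with $\hat{a}(\alpha)<0$, witnessing $\alpha\notin\nn(\Pos(C))$.

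The main (and essentially only) obstacle is this separation step: one must convert the topological fact $\alpha\notin C$ into the existence of a single element $a\in A$ that is non-negative on $C$ but strictly negative at $\alpha$. Thanks to the explicit basis of $\tau_{X(A)}$ given in~\eqref{eq::basis-weak-top}, this reduces to a one-line application of that basis property; if one only had a subbasis description, one would first need to produce finitely many witnesses $b_1,\ldots,b_n$ and amalgamate them into a single element of $\Pos(C)$, but the basis formulation makes this unnecessary here.
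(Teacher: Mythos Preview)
Your proposal is correct and follows essentially the same approach as the paper: both arguments reduce the nontrivial inclusion $\nn(\Pos(C))\subseteq C$ to the basis description~\eqref{eq::basis-weak-top}, producing an element $-a\in\Pos(C)$ with $\hat{(-a)}(\beta)<0$ whenever $\beta\notin C$. The only cosmetic difference is that the paper phrases this as ``every basic neighborhood $U(a)$ of $\beta\in\nn(\Pos(C))$ meets $C$'' (hence $\beta\in\overline{C}=C$), whereas you argue the contrapositive directly.
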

\begin{proof}
For \eqref{prop::nn-vs-Pos::1} let~$C\subseteq X(A)$ be closed. Let~$\beta\in\nn(\Pos(C))$. 
Since~$C$ is closed w.r.t.~$\tau_{X(A)}$, it suffices to show that $U(a)\cap C\neq\emptyset$ for all~$a\in A$ with $\beta\in U(a)$ (cf.~\eqref{eq::basis-weak-top}). 
Now, let~$a\in A$ such that $\beta\in U(a)$ and assume for a contradiction that $U(a)\cap C=\emptyset$. 
Then $\hat{a}(\alpha)\leq 0$ for all~$\alpha\in C$, i.e., $-a\in\Pos(C)$, and so~$\beta(-a)\geq 0$. 
This contradicts $\beta\in U(a)$, i.e., $U(a)\cap C\neq\emptyset$, and hence,~$\beta\in C$.

The converse inclusion in~\eqref{prop::nn-vs-Pos::1} and statement~\eqref{prop::nn-vs-Pos::2} are easy to verify.
\end{proof}

\noindent
Throughout this article each linear functional $L\colon A\to\RR$ is assumed to be \emph{normalized}, that is, $L(1)=1$.
A linear functional $L\colon A\to\RR$ is \emph{$Q$--positive} on a quadratic module~$Q$ in~$A$ if $L(Q)\subseteq[0,\infty)$.

\begin{lem}
\thlabel{lem::char-posi}
Let $L\colon A\to\RR$ be a linear functional and let~$Q$ be an Archimedean quadratic module in~$A$. 
If~$L$ is $Q$--positive, then~$L$ is $\Pos(\nn(Q))$--positive.
\end{lem}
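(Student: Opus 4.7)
The plan is to reduce $\Pos(\nn(Q))$--positivity of $L$ to $Q$--positivity via the Archimedean Positivstellensatz (often called Jacobi's representation theorem; see e.g.\ \cite[Theorem~5.4.4]{Mar08}), which asserts that for an Archimedean quadratic module $Q$ in $A$ and any $a\in A$ with $\hat{a}(\alpha)>0$ for all $\alpha\in\nn(Q)$, one has $a\in Q$.

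First, I would take an arbitrary $a\in\Pos(\nn(Q))$, so that $\hat{a}(\alpha)\geq 0$ for every $\alpha\in\nn(Q)$. For any $\varepsilon>0$, the function $\widehat{a+\varepsilon\cdot 1}=\hat{a}+\varepsilon$ is then strictly positive on $\nn(Q)$. Applying the Archimedean Positivstellensatz to the Archimedean quadratic module $Q$, I conclude that $a+\varepsilon\cdot 1\in Q$.

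Next, I would use the assumed $Q$--positivity of $L$ together with the normalization $L(1)=1$ to obtain
\begin{equation*}
0\leq L(a+\varepsilon\cdot 1)=L(a)+\varepsilon
\end{equation*}
for every $\varepsilon>0$. Letting $\varepsilon\to 0^+$ yields $L(a)\geq 0$, and since $a\in\Pos(\nn(Q))$ was arbitrary, this gives $\Pos(\nn(Q))$--positivity of $L$.

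The only non-trivial ingredient is the Archimedean Positivstellensatz, which is classical and can simply be quoted; once it is invoked, the rest of the argument is a one-line $\varepsilon$--perturbation. I therefore do not anticipate any real obstacle beyond ensuring the correct citation to the form of Jacobi's theorem that applies to general (not necessarily finitely generated) unital commutative $\RR$--algebras with non-empty character space, which is precisely the setting fixed in this paper.
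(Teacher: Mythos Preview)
Your proposal is correct and follows essentially the same approach as the paper: take $a\in\Pos(\nn(Q))$, apply the Jacobi Positivstellensatz to conclude $a+\varepsilon\in Q$ for every $\varepsilon>0$, use $Q$--positivity and $L(1)=1$ to get $L(a)+\varepsilon\geq 0$, and let $\varepsilon\to 0$. The only cosmetic difference is the citation (the paper quotes \cite[Theorem~4]{Jac01} rather than Marshall's book), and your remark about needing the version valid for general unital commutative $\RR$--algebras is apt and matches the paper's setting.
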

\begin{proof}
Let~$L$ be $Q$--positive and~$a\in\Pos(\nn(Q))$. 
Then, for each~$\varepsilon>0$, the Jacobi Positiv\-stellensatz (see \cite[Theorem~4]{Jac01}) implies that $a+\varepsilon\in Q$ and so $L(a)+\varepsilon=L(a+\varepsilon)\geq 0$, i.e., $L(a)\geq 0$ as~$\varepsilon>0$ was arbitrary. 
\end{proof}

A function $p\colon A\to[0,\infty)$ is a \emph{seminorm (on~$A$)} if $p(\lambda a)=\left|\lambda\right|p(a)$ and~$p(a+b)\leq p(a)+p(b)$ for all~$\lambda\in\RR$ and all~$a,b\in A$. 
If in addition $p(ab)\leq p(a)p(b)$ for all~$a,b\in A$, then~$p$ is \emph{submultiplicative}. A linear functional $L\colon A\to\RR$ is \emph{$p$--continuous} w.r.t.\ a seminorm~$p$ on~$A$ if there exists~$C>0$ such that $\left|L(a)\right|\leq Cp(a)$ for all~$a\in A$.
The \emph{Gelfand spectrum} of a seminorm~$p$ on~$A$ is defined as 
$$
\sp(p):=\{\alpha\in X(A):\alpha\text{ is }p\text{--continuous}\}\subseteq X(A),
$$
and is $\sigma$--compact. 
If~$p$ is submultiplicative, then the Gelfand spectrum equals
\begin{equation}\label{eq::char-Gs}
\sp(p)=\{\alpha\in X(A):\left|\alpha(a)\right|\leq 1\cdot p(a)\text{ for all }a\in A\}
\end{equation}
and is compact (see~\cite[Lemma~3.2 and Corollary~3.3]{GhKuMa14}).
Viceversa, given~$K\subseteq X(A)$ compact, the function $\|\cdot\|_K\colon A\to[0,\infty)$ defined by
$$
\|a\|_K:=\max_{\alpha\in K}\left|\alpha(a)\right|<\infty\qquad\text{for all }a\in A
$$
is a submultiplicative seminorm.

\begin{prop}
\thlabel{prop::sp-vs-max}
The following statements hold:
\begin{enumerate}[(i)]
	\item\label{prop::sp-vs-max::1}
	$\sp(\|\cdot\|_K)=K$ for all compact~$K\subseteq X(A)$.
	\item\label{prop::sp-vs-max::2}
	$\|\cdot\|_{\sp(p)}\leq p$ for all submultiplicative seminorms~$p$ on~$A$.
\end{enumerate}
\end{prop}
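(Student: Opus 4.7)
My plan is to prove each part by reducing to results already established above, without introducing any new machinery. Part~(ii) is essentially a one-liner: since $p$ is submultiplicative, \eqref{eq::char-Gs} applies and gives $\left|\alpha(a)\right|\leq p(a)$ for every $\alpha\in\sp(p)$ and every $a\in A$; taking the maximum over the compact set $\sp(p)$ yields $\|a\|_{\sp(p)}\leq p(a)$.

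For part~(i), the forward inclusion $K\subseteq\sp(\|\cdot\|_K)$ is again immediate from~\eqref{eq::char-Gs}: any $\alpha\in K$ satisfies $\left|\alpha(a)\right|\leq\max_{\gamma\in K}\left|\gamma(a)\right|=\|a\|_K$ for every $a\in A$, and the seminorm $\|\cdot\|_K$ is submultiplicative.

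For the converse inclusion $\sp(\|\cdot\|_K)\subseteq K$, the plan is to use the duality from \thref{prop::nn-vs-Pos}, whose first assertion gives $\nn(\Pos(K))=K$. Fixing $\beta\in\sp(\|\cdot\|_K)$, it then suffices to check that $\beta(a)\geq 0$ for every $a\in\Pos(K)$. My trick is a scalar shift: for $a\in\Pos(K)$ the non-negativity of $\hat a$ on $K$ forces $\|a\|_K=\max_{\alpha\in K}\hat{a}(\alpha)=:m\geq 0$, and the element $b:=m-a$ then satisfies $0\leq\hat{b}\leq m$ on $K$, so $\|b\|_K\leq m$. Applying~\eqref{eq::char-Gs} to $\beta$ and $b$ yields $\left|m-\beta(a)\right|=\left|\beta(b)\right|\leq m$, which forces $\beta(a)\geq 0$, as required.

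I do not anticipate any real difficulty: the whole argument rests on~\eqref{eq::char-Gs} and on the already established duality $\nn\circ\Pos=\mathrm{id}$ on closed subsets of $X(A)$. The only delicate point worth flagging is noticing that for $a\in\Pos(K)$ the seminorm $\|a\|_K$ genuinely equals the maximum of $\hat{a}$ on $K$ (not merely of its absolute value), which is precisely what makes the shift $m-a$ do the work.
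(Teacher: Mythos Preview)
Your proof is correct. The approaches differ mildly in the nontrivial inclusion $\sp(\|\cdot\|_K)\subseteq K$: the paper argues directly via the basis $\{U(a):a\in A\}$ of $\tau_{X(A)}$, fixing $a$ with $\beta(a)>0$, shifting to $b:=a+\|a\|_K$ so that $\hat b\geq 0$ on $K$, and then using $\beta(b)\leq\|b\|_K$ to force $\max_{\alpha\in K}\alpha(a)>0$; you instead route through the already established duality $\nn(\Pos(K))=K$ of \thref{prop::nn-vs-Pos}\,\eqref{prop::nn-vs-Pos::1} and verify $\beta\in\nn(\Pos(K))$ via the shift $b:=m-a$. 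Both arguments rest on the same scalar-shift idea combined with~\eqref{eq::char-Gs}; yours buys a slight economy by reusing \thref{prop::nn-vs-Pos}, while the paper's argument is self-contained and parallels its own proof of \thref{prop::nn-vs-Pos} almost verbatim.
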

\begin{proof}
For~\eqref{prop::sp-vs-max::1} let~$K\subseteq X(A)$ be compact.
Let~$\beta\in\sp(\|\cdot\|_K)$. 
Since~$K$ is closed w.r.t.\ $\tau_{X(A)}$, it suffices to show that $U(a)\cap K\neq\emptyset$ for all~$a\in A$ with $\beta\in U(a)$ (cf.\ \eqref{eq::basis-weak-top}).
Now, let~$a\in A$ be such that $\beta\in U(a)$ and set $b:=a+\|a\|_K$. 
Then $\beta(b)=\beta(a)+\|a\|_K>\|a\|_K$ as $\beta\in U(a)$ and also $\alpha(b)=\alpha(a)+\|a\|_K\geq 0$ for all~$\alpha\in K$ as $\left|\alpha(a)\right|\leq\|a\|_K$ by definition. 
Since $\beta\in\sp(\|\cdot\|_K)$ this implies that 
$$
\|\alpha\|_K<\beta(b)\leq \|b\|_K=\max_{\alpha\in K}\alpha(a)+\|a\|_K.
$$
Therefore, there exists $\alpha\in K$ with~$\alpha(a)>0$, i.e., $U(a)\cap K\neq\emptyset$, and hence,~$\beta\in K$.

The converse inclusion in~\eqref{prop::sp-vs-max::1} and statement~\eqref{prop::sp-vs-max::2} are easy to verify.
\end{proof}

\noindent
Each $\sum A^2$--positive linear functional $L\colon A\to\RR$ satisfies the Cauchy--Bunyakovsky--Schwarz inequality, i.e.,
\begin{equation}\label{eq::CBS}\tag{CBS}
L(ab)^2\leq L(a^2)L(b^2)\qquad\text{for all }a,b\in A.
\end{equation}
Repeated application of~\eqref{eq::CBS} yields that~$L$ is continuous w.r.t.\ a submultiplicative seminorm~$p$ on~$A$ if and only if
\begin{equation}\label{lem::growth->subm::rem1::eq1}
\left|L(a)\right|\leq 1\cdot p(a)\qquad\text{for all }a\in A.
\end{equation}

\noindent
This combined with a result in the theory of complex Banach algebras leads to the following result. 
Recall that a \emph{complex Banach algebra} is a pair~$(B,q)$ consisting of a $\CC$--al\-ge\-bra~$B$ and a submultiplicative norm~$q$ on~$B$ such that the topology on~$B$ generated by~$q$ is complete.

\begin{lem}
\thlabel{lem::char-conti}
Let $L\colon A\to\RR$ be a $\sum A^2$--positive linear functional and let~$p$ be a submultiplicative seminorm on~$A$. 
If~$L$ is $p$--continuous, then~$L$ is $\|\cdot\|_{\sp(p)}$--continuous.
\end{lem}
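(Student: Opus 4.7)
The plan is to combine iterated application of~\eqref{eq::CBS} with Gelfand's spectral radius formula for a suitable complex Banach algebra associated to $(A,p)$. By~\eqref{lem::growth->subm::rem1::eq1}, the $p$-continuity hypothesis amounts to $|L(a)|\leq p(a)$ for all $a\in A$, so this is the estimate I intend to iterate and improve.

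The first step is to push the CBS bound through the sequence of powers $a^{2^n}$. Starting from $L(a)^2\leq L(1)L(a^2)=L(a^2)$ and iterating~\eqref{eq::CBS} with $b=a^{2^n}$, one obtains $|L(a)|\leq L(a^{2^n})^{1/2^n}$ for every $n\in\NN$, whence $p$-continuity applied to $a^{2^n}$ yields
\begin{equation*}
|L(a)|\leq p(a^{2^n})^{1/2^n}\qquad\text{for all }n\in\NN.
\end{equation*}
Submultiplicativity of $p$ makes the full sequence $p(a^n)^{1/n}$ convergent to $r(a):=\inf_{n\in\NN}p(a^n)^{1/n}$, the spectral radius of $a$ with respect to $p$, so passing to the limit along $n=2^k$ gives $|L(a)|\leq r(a)$.

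The second step is to identify $r(a)$ with $\|a\|_{\sp(p)}$. One inequality is immediate: for any $\alpha\in\sp(p)$, the characterization~\eqref{eq::char-Gs} gives $|\alpha(a)|^n=|\alpha(a^n)|\leq p(a^n)$, so taking $n$-th roots and letting $n\to\infty$ yields $|\alpha(a)|\leq r(a)$, hence $\|a\|_{\sp(p)}\leq r(a)$. The reverse inequality is the actual content and the step I expect to be the main obstacle: I would quotient $A$ by the closed ideal $\ker p$, complete the resulting real normed algebra under $p$, and complexify to obtain a commutative unital complex Banach algebra $(B,q)$ together with an $\RR$-algebra homomorphism $\iota\colon A\to B$ having dense image and $q\circ\iota\leq p$. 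The elements of $\sp(p)$ correspond bijectively to the characters of $B$ (via extension by continuity and $\CC$-linearity in one direction and restriction to $\iota(A)$ in the other), so Gelfand's spectral radius formula applied in $B$ delivers $r(a)=\max_{\alpha\in\sp(p)}|\alpha(a)|=\|a\|_{\sp(p)}$.

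Combining the two steps yields $|L(a)|\leq\|a\|_{\sp(p)}$ for every $a\in A$, which is exactly $\|\cdot\|_{\sp(p)}$-continuity of $L$ with constant~$1$.
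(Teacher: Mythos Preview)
Your overall strategy coincides with the paper's: iterate \eqref{eq::CBS} to obtain $|L(a)|\leq p(a^{2^d})^{1/2^d}$, pass to a complex Banach algebra $(B,q)$ via quotient, completion, and complexification, and then invoke Gelfand's spectral radius formula. The paper does exactly this, only more tersely, writing $|L(a)|\leq\|\phi(a)\|_{\sp(q)}$ and finishing with the inequality $\|\phi(a)\|_{\sp(q)}\leq\|a\|_{\sp(p)}$.

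Your second step, however, contains a genuine gap: the asserted bijection between $\sp(p)$ and the Gelfand spectrum of $B$ is false in general. A character $\beta\colon B\to\CC$ restricted to the real subalgebra need not be $\RR$--valued, and when it is not, $\beta\circ\iota$ does not define an element of $X(A)$ at all. Concretely, take $A=\RR[X]$ and $p(f):=\sup_{|z|\leq 1}|f(z)|$, the supremum over the closed complex unit disc; this is a submultiplicative norm on $A$. Then $\sp(p)=[-1,1]$, while the complexified completion of $(A,p)$ is the disc algebra, whose Gelfand spectrum consists of evaluations at \emph{all} points of $\overline{\DD}$; evaluation at $z=i$ sends $X$ to $i\notin\RR$. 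For $a=X^2-1$ one computes $r(a)=\sup_{|z|\leq 1}|z^2-1|=2$ (attained at $z=i$) but $\|a\|_{\sp(p)}=\max_{t\in[-1,1]}|t^2-1|=1$, so the identity $r(a)=\|a\|_{\sp(p)}$ your argument relies on fails. The same example shows that the passage from the complex spectral radius $r(a)=\|\phi(a)\|_{\sp(q)}$ down to $\|a\|_{\sp(p)}$ cannot be extracted from Gelfand theory alone in this generality; some additional input is needed at precisely the point you flagged as the main obstacle.
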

\begin{proof}
Passing to the completion of the canonical quotient of the seminormed algebra~$(A,p)$ (see~\cite[Remark~3.4]{GhKuMa14}) and then to its complexification (see~\cite[I, \S13, Proposition~3 (p.~68)]{BoDu73}), there exists a complex Banach algebra~$(B,q)$ and a homomorphism $\phi\colon A\to B$ such that $p=q\circ\phi$.
Now, let~$L$ be $p$--continuous and~$a\in A$. 
Then $L(a)^{2^d}\leq L(a^{2^d})\leq 1\cdot p(a^{2^d})=q(\phi(a^{2^d}))=q(\phi(a)^{2^d})$ for all~$d\in\NN_0$ by~\eqref{lem::growth->subm::rem1::eq1}, and so~\cite[VII, Theorem~8.9 (p.~220)]{Co90} implies that 
$$
\left|L(a)\right|\leq\lim_{d\to\infty}\sqrt[2^d]{q(\phi(a)^{2^d})}=\|\phi(a)\|_{\sp(q)},
$$
i.e., $\left|L(a)\right|\leq\|a\|_{\sp(p)}$ as $\|\phi(a)\|_{\sp(q)}\leq \|a\|_{\sp(p)}$.
\end{proof}

\section{Main results}\label{sec::char}

In this section we first give a proof of~\thref{thm::MP<->growth} using the recent general version of the classical Nussbaum theorem in~\cite[Theorem~3.17]{InKuKuMi20}.
Then we relate~\thref{thm::MP<->growth} to the characterizations of linear functionals that admit representing Radon measures with compact support in~\cite{GhKuMa14} and~\cite{Jac01} (see~\thref{thm::MP<->subm,thm::MP<->Arch}, respectively).
This is summarized in~\thref{char::compact}, where we also provide a detailed analysis of the compact support.

\begin{proof}[Proof of~\thref{thm::MP<->growth}]
Set $C_a:=\sup_{n\in\NN}\sqrt[2n]{L(a^{2n})}<\infty$ for all~$a\in A$. 
In order to apply~\cite[Theorem~3.17]{InKuKuMi20} we introduce the index set
$$
I:=\{S\subseteq A: S\text{ finitely generated (unital) subalgebra of }A\}
$$
and note that, for each~$a\in A$, clearly $\sum_{n=1}^\infty(\sqrt[2n]{L(a^{2n})})^{-1}=\infty$. 
Therefore, by~\cite[Theorem~3.17-(i)]{InKuKuMi20}, for each~$S\in I$ there exists a unique representing Radon measure~$\nu_S$ on~$X(S)$ for~$L\!\restriction_S$.
Let us now show that the family~$\{\nu_S:S\in I\}$ fulfils the so-called Prokhorov condition by means of the characterization in~\cite[Proposition~1.18]{InKuKuMi20}. 
For each~$S\in I$, define $K^{(S)}:=\{\alpha\in X(S):\left|\alpha(a)\right|\leq C_a\text{ for all }a\in S\}$. 
Now, let~$S\subseteq T$ in~$I$. 
The closed set $K^{(S)}\subseteq X(S)$ is compact as it embeds into the compact product $\prod_{a\in S}[-C_a,C_a]$ via the continuous map $\alpha\mapsto (\alpha(a))_{a\in S}$ and the inclusion $\pi_{S,T}(K^{(T)})\subseteq K^{(S)}$ holds by definition, where $\pi_{S,T}\colon X(T)\to X(S)$ denotes the restriction map. 
To show that $\nu_S(K^{(S)})=1$, let~$a\in S$ as well as $\varepsilon>0$ and consider the set $B(a,\varepsilon):=\{\alpha\in X(S):\left|\alpha(a)\right|\leq C_a+\varepsilon\}$. 
Then, for each~$n\in\NN$, Chebyshev's inequality implies that
$$
\nu_S(X(S)\setminus B(a,\varepsilon))\cdot(C_a+\varepsilon)^{2n} \leq\int\hat{a}^{2n}\dd\nu_S =L(a^{2n})\leq C_a^{2n}
$$
and so $\nu_S(B(a,\varepsilon))=1$ as~$C_a(C_a+\varepsilon)^{-1}<1$ and~$n\in\NN$ was arbitrary. 
Since the equality
$$
K^{(S)}=\bigcap_{a\in S}\{\alpha\in X(S):\left|\alpha(a)\right|\leq C_a\}=\bigcap_{a\in S}\bigcap_{\varepsilon>0}B(a,\varepsilon)
$$
holds, \cite[I, \S6 (a) (p.~40)]{Swa73} yields that $\nu_S(K^{(S)})=1$ as~$\nu_S$ is Radon and $B(a,\varepsilon)$ is closed. 
Therefore, the family~$\{\nu_S:S\in I\}$ fulfils Prokhorov's condition and hence, by~\cite[Theorem~3.17-(iii)]{InKuKuMi20} there exists a unique representing Radon measure~$\nu$ for~$L$. Since~$\nu$ is a Radon measure, we can argue as for the set~$K^{(S)}$ to show that $\nu(K_L)=1$, where 
\begin{equation}\label{thm::MP<->growth::eq2}
K_L:=\{\alpha\in X(A): |\alpha(a)|\leq C_a\text{ for all }a\in A\}.
\end{equation}
This establishes~$\supp(\nu)\subseteq K_L$, while~$\supp(\nu)\supseteq K_L$ is established in \thref{char::compact::support}.
Moreover, $K_L$ is compact as it embeds into~$\prod_{a\in A}[-C_a,C_a]$ and so $\supp(\nu)$ is compact, too.

Conversely, let~$\nu_L$ be the representing Radon measure for~$L$ with compact support and let~$a\in A$. 
Then
$$
L(a^{2n})=\int\hat{a}^{2n}\dd\nu_L\leq\max_{\alpha\in\supp(\nu_L)}\left|\alpha(a)\right|^{2n}<\infty\qquad\text{for all }n\in\NN
$$
shows that $\sup_{n\in\NN}\sqrt[2n]{L(a^{2n})}<\infty$.
\end{proof}
 
\begin{cor}
\thlabel{char::compact}
For a $\sum A^2$--positive linear functional $L\colon A\to\RR$ the following are equivalent:
\begin{enumerate}[(i)]
	\item\label{char::compact::1}
	There exists a unique representing Radon measure~$\nu_L$ for~$L$ with compact support.
	\item\label{char::compact::2}
	$\sup_{n\in\NN}\sqrt[2n]{L(a^{2n})}<\infty$ for all~$a\in A$.
	\item\label{char::compact::3}
	$L$ is $p$--continuous for some submultiplicative seminorm~$p$ on~$A$.
	\item\label{char::compact::4}
	$L$ is $Q$--positive for some Archimedean quadratic module~$Q$ in~$A$.	
\end{enumerate}
In this case, the submultiplicative seminorm defined by $p_L(a):=\sup_{n\in\NN}\sqrt[2n]{L(a^{2n})}$ for all~$a\in A$ (see~\eqref{lem::growth->subm::eq1}) is the smallest on~$A$ w.r.t.\ which~$L$ is continuous and the Archimedean quadratic module generated by $\sup_{n\in\NN}\sqrt[2n]{L(a^{2n})}\pm a$ with $a\in A$ (see~\eqref{lem::growth->Arch::eq1}) is the largest in~$A$ on which~$L$ is positive. 
Moreover, $p_L=\|\cdot\|_{\supp(\nu_L)}$ and $Q_L=\Pos(\supp(\nu_L))$ as well as
\begin{equation}\label{loc-supp}
\supp(\nu_L)=K_L=\sp(p_L)=\nn(Q_L),
\end{equation}
where~$K_L$ is defined in~\eqref{thm::MP<->growth::eq2}.
\end{cor}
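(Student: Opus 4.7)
Since (i)$\Leftrightarrow$(ii) is exactly \thref{thm::MP<->growth}, what remains is to close the two cycles (ii)$\Leftrightarrow$(iii) and (ii)$\Leftrightarrow$(iv), establish the minimality of $p_L$ and maximality of $Q_L$, and prove the sharp support identifications in~\eqref{loc-supp}. My strategy is to pivot everything through the growth quantity $p_L(a):=\sup_{n\in\NN}\sqrt[2n]{L(a^{2n})}$. For the forward implications from~(ii) I would invoke the preceding Sections~\ref{sec::growth}--\ref{sec::positivity}: under~\eqref{thm::MP<->growth::cond}, $p_L$ is a submultiplicative seminorm with $|L|\leq p_L$ (witnessing~(iii)) and the Archimedean quadratic module $Q_L$ generated by $\{p_L(a)\pm a:a\in A\}$ satisfies $L(Q_L)\subseteq[0,\infty)$ (witnessing~(iv))---this is the content of the referenced lemmas and I would simply quote it.

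The two reverse implications are the \emph{same} monotonicity estimate in two guises. If $L$ is $p$-continuous with $p$ submultiplicative, then~\eqref{lem::growth->subm::rem1::eq1} and submultiplicativity give $L(a^{2n})\leq p(a^{2n})\leq p(a)^{2n}$, hence $p_L(a)\leq p(a)<\infty$; this yields (iii)$\Rightarrow$(ii) together with the minimality of $p_L$. If instead $L$ is $Q$-positive with $Q$ Archimedean, then \thref{lem::char-posi} upgrades this to $\Pos(\nn(Q))$-positivity, and since $\nn(Q)$ is compact with $\|a\|_{\nn(Q)}^{2n}-a^{2n}\in\Pos(\nn(Q))$, we get $L(a^{2n})\leq\|a\|_{\nn(Q)}^{2n}$ and thus $p_L(a)\leq\|a\|_{\nn(Q)}$, proving (iv)$\Rightarrow$(ii). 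The same bound combined with~\eqref{eq::char-Gs} and \thref{prop::sp-vs-max}(i) produces $K_L\subseteq\sp(\|\cdot\|_{\nn(Q)})=\nn(Q)$; using the forthcoming $\supp(\nu_L)=K_L$ together with \thref{prop::nn-vs-Pos}(ii), this gives $Q\subseteq\Pos(\nn(Q))\subseteq\Pos(\supp(\nu_L))=Q_L$, i.e., the maximality of $Q_L$.

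Finally,~\eqref{loc-supp} is a diagram-chase conditional on $\supp(\nu_L)=K_L$: the inclusion $\subseteq$ appears inside \thref{thm::MP<->growth}, while the reverse I would borrow from \thref{char::compact::support}. Granted, $\sp(p_L)=K_L$ is immediate from~\eqref{eq::char-Gs} and the definition~\eqref{thm::MP<->growth::eq2} of $K_L$, and $\nn(Q_L)=K_L$ holds because the generators $p_L(a)\pm a$ of $Q_L$ cut out exactly $K_L$, after which \thref{prop::nn-vs-Pos}(i) concludes. The identity $p_L=\|\cdot\|_{\supp(\nu_L)}$ follows by integrating $a^{2n}$ against $\nu_L$ (one direction) and the definition of $K_L$ (the other); the identity $Q_L=\Pos(\supp(\nu_L))$ comes from $Q_L\subseteq\Pos(K_L)$ (inspection of generators) together with the already-established maximality applied to the Archimedean $\Pos(K_L)$, on which $L$ is positive since $\nu_L$ is $K_L$-supported. \textbf{The main obstacle} I anticipate is the reverse support inclusion $K_L\subseteq\supp(\nu_L)$: only with it does~\eqref{loc-supp} collapse to equalities and do the minimality/maximality statements pin $p_L$ and $Q_L$ uniquely. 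I expect it to be handled in \thref{char::compact::support} via a Urysohn/Stone--Weierstrass approximation on the compact space $K_L$, exploiting that $p_L$ coincides with $\|\cdot\|_{K_L}$ and therefore embeds $A$ densely in $C(K_L)$ after passing to the quotient modulo $\ker p_L$.
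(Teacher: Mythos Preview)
Your overall architecture matches the paper's: pivot through $p_L$, quote \thref{lem::growth->subm} and \thref{lem::growth->Arch} for (ii)$\Rightarrow$(iii),(iv), read off (iii)$\Rightarrow$(ii) and minimality of $p_L$ from submultiplicativity, and assemble~\eqref{loc-supp} from $\sp(p_L)=K_L$ and $\nn(Q_L)=K_L$ together with $\supp(\nu_L)=K_L$. Two points deserve correction.

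\textbf{Circularity in the $Q_L$ argument.} You derive maximality via $Q\subseteq\Pos(\nn(Q))\subseteq\Pos(\supp(\nu_L))=Q_L$, invoking the identity $Q_L=\Pos(\supp(\nu_L))$; but then you prove that identity by applying ``the already-established maximality'' to $\Pos(K_L)$. That is circular: neither statement has been established independently of the other. The paper breaks the loop by the one-line observation (inside \thref{lem::Arch->growth}) that $Q\subseteq Q_L$ follows \emph{directly} from the definition $Q_L=\{a:L(b^2a)\geq 0\text{ for all }b\}$, since $b^2a\in Q$ whenever $a\in Q$. With maximality thus secured independently, $\Pos(K_L)\subseteq Q_L$ follows by applying it to $\Pos(K_L)$, and $Q_L\subseteq\Pos(K_L)$ by inspection of the generators, giving $Q_L=\Pos(\supp(\nu_L))$ without circularity. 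Relatedly, your route (iv)$\Rightarrow$(ii) via \thref{lem::char-posi} invokes the Jacobi Positivstellensatz; the paper's \thref{lem::Arch->growth} is more elementary, using only $N\pm a\in Q\Rightarrow N^{2^d}\pm a^{2^d}\in Q$.

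\textbf{The reverse inclusion $K_L\subseteq\supp(\nu_L)$.} Your anticipated Urysohn/Stone--Weierstrass argument is not what \thref{char::compact::support} does, and no density in $C(K_L)$ is needed. The paper's mechanism is purely the duality of \thref{prop::sp-vs-max}: with $K:=\supp(\nu_L)$, integrating $a^{2n}$ gives $p_L\leq\|\cdot\|_K$, while minimality of $p_L$ (applied to the submultiplicative $\|\cdot\|_K$, w.r.t.\ which $L$ is trivially continuous) gives $p_L\geq\|\cdot\|_K$; hence $p_L=\|\cdot\|_K$ and then $K=\sp(\|\cdot\|_K)=\sp(p_L)=K_L$ by \thref{prop::sp-vs-max}-\eqref{prop::sp-vs-max::1}. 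You in fact already have both halves of $p_L=\|\cdot\|_K$ in hand, so you can close~\eqref{loc-supp} yourself rather than importing the inclusion as a black box.
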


\noindent
If the $\sum A^2$--positive linear functional $L\colon A\to\RR$ is represented by the Radon measure~$\nu_L$ with compact support, then it is easy to show that~$L$ is $\|\cdot\|_{\supp(\nu_L)}$--continuous and $\Pos(\supp(\nu_L))$--positive. 
This yields the implications~\eqref{char::compact::1}$\Rightarrow$\eqref{char::compact::3} and \eqref{char::compact::1}$\Rightarrow$\eqref{char::compact::4}, respectively. 
All other remaining implications are shown in the following subsections as illustrated by the diagram.
\begin{center}
\begin{tikzcd}
&&&& \eqref{char::compact::1}
\arrow[lllldd, Leftarrow, bend right=20, "\text{\thref{thm::MP<->subm}}"' description]
\arrow[rrrrdd, Leftarrow, bend left=20, "\text{\thref{thm::MP<->Arch}}" description]
&&&&\\
&&&& &&&&\\
\eqref{char::compact::3}
\arrow[rrrrrrrr, Rightarrow, bend left=7, "\text{\thref{lem::subm->Arch}}" description, near start, shorten <=1.5ex]
\arrow[rrrrdd, Rightarrow, bend left=7, "\text{\thref{lem::subm->growth}}" description, near end, shorten <=1.5ex]
&&&& &&&&
\eqref{char::compact::4}
\arrow[llllllll, Rightarrow, bend left=7, "\text{\thref{lem::Arch->subm}}" description, near start, shorten <=1.5ex]
\arrow[lllldd, Rightarrow, bend right=7, "\text{\thref{lem::Arch->growth}}" description, near end, shorten <=1.5ex]
\\
&&&& &&&&\\
&&&& \eqref{char::compact::2}
\arrow[uuuu, Leftrightarrow, crossing over, "\text{\thref{thm::MP<->growth}}" description, near end]
\arrow[lllluu, Rightarrow, bend left=15, "\text{\thref{lem::growth->subm}}" description, near end, shorten <=.5ex]
\arrow[rrrruu, Rightarrow, bend right=15, "\text{\thref{lem::growth->Arch}}" description, near end, shorten <=.5ex]
&&&&\\
\end{tikzcd}
\end{center}\vspace{-\baselineskip}
In fact, we establish the equivalences of the growth condition~\eqref{char::compact::2} (see \eqref{thm::MP<->growth::cond}), the continuity condition~\eqref{char::compact::3}, and the positivity condition~\eqref{char::compact::4} without appealing to the representing Radon measure $\nu_L$.
The localization of the support in~\eqref{loc-supp} is shown in~\thref{char::compact::support}.

\subsection{The growth condition}\label{sec::growth}
 
In the following we first analyze in detail some properties of the growth condition \eqref{thm::MP<->growth::cond} (i.e., \thref{char::compact}-\eqref{char::compact::2}) and then we directly derive from it \thref{char::compact}-\eqref{char::compact::3}~and \thref{char::compact}-\eqref{char::compact::4}.

\begin{rem}
\thlabel{thm::main-thm::rem1}
Let $L\colon A\to\RR$ be a $\sum A^2$--positive linear functional.
\begin{enumerate}[(i)]
	\item\label{rem::sup-power-generators::1}
	Then~\eqref{eq::CBS} yields that for each~$a\in A$ the sequence $(\sqrt[2n]{L(a^{2n})})_{n\in\NN}$ is monotone increasing. 
	Therefore, the growth condition \eqref{thm::MP<->growth::cond} is of asymptotic nature, i.e.,
	$$
	\sup_{n\in\NN}\sqrt[2n]{L(a^{2n})}=\lim_{n\to\infty}\sqrt[2n]{L(a^{2n})}\qquad\text{for all }a\in A,
	$$
	which allows us to work with suitable subsequences. 
	For example the growth condition \eqref{thm::MP<->growth::cond} holds if and only if for each $a\in A$ there exists $f_a\colon\NN\to\NN$ unbounded, $g_a\colon\NN\to[0,\infty)$ with $\lim_{n\to\infty}\sqrt[2f_a(n)]{g_a(n)}<\infty$, and $C_a\geq 0$ such that $L(a^{2f_a(n)})\leq g_a(n)C_a^{2f_a(n)}$ for all $n\in\NN$.
	
	\item\label{rem::sup-power-generators::0} 
	The subsequence $(\sqrt[2^d]{L(a^{2^d})})_{d\in\NN}$ will be crucial as for each $d\in\NN$ setting 
	\begin{equation}
	p_d(a):=\sqrt[2^d]{L(a^{2^d})}\qquad\text{for all }a\in A
	\end{equation}
	defines a seminorm. 
	Indeed, let~$d\in\NN$ and note that $p_d(ab)\leq p_{d+1}(a)p_{d+1}(b)$ for all~$a,b\in A$ by~\eqref{eq::CBS} as $p_{d+1}(a)^2=p_d(a^2)$ for all~$a\in A$ by definition. 
	To show that~$p_d$ is a seminorm, we proceed by induction on $d$. Since $(a,b)\mapsto L(ab)$ defines a positive semidefinite bilinear form, $p_1$ is a seminorm. 
	Now, assume that~$p_d$ is a seminorm and let~$\lambda\in\RR$ and~$a,b\in A$. 
	Then $p_{d+1}(\lambda a)^2=\lambda^2p_d(a^2)$ and 
	$$
	p_d(a^2)+2p_d(ab)+p_d(b^2)\leq p_{d+1}(a)^2+2p_{d+1}(a)p_{d+1}(b)+p_{d+1}(b)^2,
	$$
	i.e., $p_{d+1}(a+b)^2\leq (p_{d+1}(a)+p_{d+1}(b))^2$,
	yield that~$p_{d+1}$ is a seminorm. 
	Moreover, note that $\ker(p_d)=\ker(p_1)$ as \eqref{eq::CBS} yields that $p_1(a)\leq p_d(a)$ and $p_d(a)^{2^d}\leq p_1(a)p_1(a^{2^d-1})$ for all $a\in A$.

	\item 
	If $L$ is represented by the Radon measure $\nu_L$ with compact support, then for each~$n\in\NN$ the function $a \mapsto \sqrt[2n]{L(a^{2n})}$ coincides with the $\L^{2n}$--seminorm of the Lebesgue space $\mathcal{L}^{2n}(X(A),\nu_L)$ of $2n$--integrable functions. 
	By a standard result of measure theory (cf.\ \cite[p.~143]{Koe69}), we get that $\sup_{n\in\NN}\sqrt[2n]{L(a^{2n})} = \|\hat{a}\|_\infty$ for all $a\in A$,	where~$\|\cdot\|_\infty$ denotes the (submultiplicative) $\L^\infty$--seminorm, given by the essential supremum on~$X(A)$, of $\mathcal{L}^\infty(X(A),\nu_L)$.
	\end{enumerate}
\end{rem}

\begin{lem}
\thlabel{lem::growth->subm}
Let $L\colon A\to\RR$ be a $\sum A^2$--positive linear functional on $A$ such that $\sup_{n\in\NN}\sqrt[2n]{L(a^{2n})}<\infty$ for all~$a\in A$. 
Then setting
\begin{equation}\label{lem::growth->subm::eq1}
p_L(a):=\sup_{n\in\NN}\sqrt[2n]{L(a^{2n})}\qquad\text{for all }a\in A
\end{equation}
defines a submultiplicative seminorm w.r.t.\ which~$L$ is continuous.
\end{lem}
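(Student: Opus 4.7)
The plan is to reduce the claim to the subsequence $(p_d)_{d\in\NN}$ of seminorms introduced in \thref{thm::main-thm::rem1}~\eqref{rem::sup-power-generators::0}, and then pass to the limit. The key observation is that \thref{thm::main-thm::rem1}~\eqref{rem::sup-power-generators::1} tells us $n\mapsto\sqrt[2n]{L(a^{2n})}$ is monotone increasing and converges to the same finite value $p_L(a)$ as any of its subsequences; in particular, along $n=2^{d-1}$ we get
$$
p_L(a)=\sup_{d\in\NN}p_d(a)=\lim_{d\to\infty}p_d(a)\qquad\text{for all }a\in A.
$$

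From here the three required properties follow directly. First, to show that $p_L$ is a seminorm, I would use that a pointwise supremum of a family of seminorms is itself a seminorm whenever it is finite-valued; since each $p_d$ is already known to be a seminorm and the supremum is finite by hypothesis, the absolute-homogeneity and subadditivity of $p_L$ follow from the corresponding properties of the $p_d$'s and the subadditivity of~$\sup$.

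Second, for submultiplicativity, I would invoke the inequality $p_d(ab)\leq p_{d+1}(a)p_{d+1}(b)$ stated in \thref{thm::main-thm::rem1}~\eqref{rem::sup-power-generators::0} (a direct consequence of~\eqref{eq::CBS}) and pass to the limit $d\to\infty$ on both sides, which yields $p_L(ab)\leq p_L(a)p_L(b)$.

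Finally, for $p_L$-continuity of $L$, applying \eqref{eq::CBS} with $b=1$ and using $L(1)=1$ gives $L(a)^2\leq L(a^2)=p_1(a)^2$, so $|L(a)|\leq p_1(a)\leq p_L(a)$ for all $a\in A$, showing that~$L$ is $p_L$-continuous with constant~$1$. I do not expect a genuine obstacle here; the only point that needs attention is the bookkeeping between the supremum over all $n\in\NN$ in the definition of $p_L$ and the subsequence $(p_d)_{d\in\NN}$ of dyadic powers on which the seminorm structure is already established, and this is handled by the monotonicity noted above.
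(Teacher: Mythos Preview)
Your proposal is correct and follows essentially the same route as the paper: both identify $p_L$ with $\sup_{d\in\NN}p_d$ via the monotonicity in \thref{thm::main-thm::rem1}~\eqref{rem::sup-power-generators::1}, deduce the seminorm property from the $p_d$'s, obtain submultiplicativity from $p_d(ab)\leq p_{d+1}(a)p_{d+1}(b)$, and get continuity from $|L(a)|\leq\sqrt{L(a^2)}\leq p_L(a)$. The only cosmetic difference is that you phrase the submultiplicativity step as a limit $d\to\infty$ whereas the paper takes the supremum directly, which amounts to the same thing by monotonicity.
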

\begin{proof}
Recall that $\sup_{n\in\NN}\sqrt[2n]{L(a^{2n})}=\sup_{d\in\NN}p_d(a)$ for all~$a\in A$ by~\thref{thm::main-thm::rem1}-\eqref{rem::sup-power-generators::1}. 
Thus, using~\thref{thm::main-thm::rem1}-\eqref{rem::sup-power-generators::0}, it is easy to verify that~$p_L$ is a seminorm which is submultiplicative as 
$$
p_L(ab)=\sup_{d\in\NN}p_d(ab)\leq \sup_{d\in\NN}p_{d+1}(a)\cdot\sup_{d\in\NN}p_{d+1}(b)=p_L(a)p_L(b)
$$
for all~$a,b\in A$. 
Clearly, $L$ is $p_L$--continuous as $\left|L(a)\right|\leq \sqrt{L(a^2)}\leq 1\cdot p_L(a)$ for all~$a\in A$ by~\eqref{eq::CBS}.
\end{proof}

\noindent
Note that $\sp(p_L)=K_L$ by~\eqref{eq::char-Gs} as~$p_L$ is submultiplicative.

\begin{lem}
\thlabel{lem::growth->Arch}
Let $L\colon A\to\RR$ be a $\sum A^2$--positive linear functional on~$A$ such that $\sup_{n\in\NN}\sqrt[2n]{L(a^{2n})}<\infty$ for all~$a\in A$. 
Then
\begin{equation}\label{lem::growth->Arch::eq1}
Q_L := \{a\in A:L(b^2a)\geq 0\text{ for all }b\in A\}
\end{equation}
is an Archimedean quadratic module on which~$L$ is positive. 
In fact, $Q_L$ is generated by~$\sup_{n\in\NN}\sqrt[2n]{L(a^{2n})}\pm a$ with~$a\in A$.
\end{lem}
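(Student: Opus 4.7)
I would start with the routine algebraic checks: $1\in Q_L$ because $L(b^2)\geq 0$ by $\sum A^2$--positivity; $Q_L$ is closed under addition by linearity of $L$; and $A^2 Q_L\subseteq Q_L$ since $L((bc)^2 a)=L(b^2(c^2 a))\geq 0$ whenever $a\in Q_L$. Positivity of $L$ on $Q_L$ follows by taking $b=1$ in the defining inequality.

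The heart of the argument is the estimate
\[
|L(b^2 a)|\ \leq\ L(b^2)\, p_L(a),\qquad a,b\in A,
\]
where $p_L(a):=\sup_n\sqrt[2n]{L(a^{2n})}$, since this directly yields $L(b^2(p_L(a)\pm a))\geq 0$, i.e.\ $p_L(a)\pm a\in Q_L$. I would prove it by a two-level iteration of~\eqref{eq::CBS}. Iterating CBS on the pairs $(b,ba^{2^{k-1}})$ for $k=1,\ldots,d$ telescopes to $L(b^2 a)^{2^d}\leq L(b^2)^{2^d-1}\,L(b^2 a^{2^d})$. A further CBS applied to $(b^2,a^{2^d})$ gives $L(b^2 a^{2^d})^2\leq L(b^4)\,L(a^{2^{d+1}})$. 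Combining, extracting the $2^d$-th root and letting $d\to\infty$: $L(b^2)^{1-1/2^d}\to L(b^2)$, $L(b^4)^{1/2^{d+1}}\to 1$ (the degenerate case $L(b^2)=L(b^4)=0$ is handled separately, as it forces $L(b^2 a)=0$ by another CBS), and $L(a^{2^{d+1}})^{1/2^{d+1}}\to p_L(a)$ by \thref{thm::main-thm::rem1}-\eqref{rem::sup-power-generators::1}. Applying the one-sided bound to both $a$ and $-a$ yields the absolute value estimate. Archimedeanity then follows at once: for $a\in A$ and any $N\in\NN$ with $N\geq p_L(a)$, the decomposition $N\pm a=(N-p_L(a))+(p_L(a)\pm a)$ exhibits $N\pm a$ as an element of $Q_L$, the scalar summand being $(\sqrt{N-p_L(a)})^{2}\cdot 1\in\sum A^2\subseteq Q_L$.

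For the final identification, let $\tilde Q$ be the quadratic module generated by $\{p_L(a)\pm a:a\in A\}$. The inclusion $\tilde Q\subseteq Q_L$ is immediate from the previous step. For the reverse $Q_L\subseteq\tilde Q$, I would first verify that $L$ is $\tilde Q$--positive (on each generator, $L(p_L(a)\pm a)=p_L(a)\pm L(a)\geq 0$ because $|L(a)|\leq\sqrt{L(a^2)}\leq p_L(a)$ by~\eqref{eq::CBS}, and this extends to $A^2$--multiples by the estimate above), and then invoke \thref{lem::char-posi} applied to the Archimedean module $\tilde Q$, exploiting the identification $\nn(\tilde Q)=K_L$ that holds by construction of the generators. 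I expect the main technical obstacle to be the limit passage in the two-level CBS iteration, with careful handling of the degenerate cases; the reverse inclusion in the generation claim is the other delicate point, since a quadratic module is not closed under subtraction and one must route through a Positivstellensatz-type identification via the common non-negativity set.
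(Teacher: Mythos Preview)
Your treatment of the quadratic--module axioms and of Archimedeanity is essentially the paper's: both rest on the iterated \eqref{eq::CBS} estimate
\[
|L(b^2a)|^{2^{d+1}}\leq L(b^2)^{2^{d+1}-2}\,L(b^4)\,L(a^{2^{d+1}})
\]
followed by the limit $d\to\infty$; the paper merely normalizes $L(b^2)=1$ and writes the conclusion as $L(b^2(C_a\pm a))\geq C_a(1-\sqrt[2^{d+1}]{L(b^4)})$, which is your bound rearranged.

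The genuine gap is in the reverse inclusion $Q_L\subseteq\tilde Q$. Applying \thref{lem::char-posi} to the Archimedean module $\tilde Q$ yields only that $L$ is $\Pos(\nn(\tilde Q))=\Pos(K_L)$--positive; this enlarges the positivity domain of $L$ but says nothing about membership in $\tilde Q$. Even if you independently established $\nn(Q_L)=K_L$ (which the paper in fact deduces \emph{from} the generation claim, not the other way round) and then fed $a\in Q_L\subseteq\Pos(K_L)$ into Jacobi's Positivstellensatz, you would obtain at best $a+\varepsilon\in\tilde Q$ for every $\varepsilon>0$, hence $Q_L\subseteq\overline{\tilde Q}$, never $Q_L\subseteq\tilde Q$. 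The paper explicitly flags exactly this obstruction in the discussion following \thref{lem::subm->Arch}. So the ``routing through the common non-negativity set'' you anticipate does not close the argument.

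The paper's proof of the generation claim is instead a short direct computation avoiding any Positivstellensatz. For $a\in Q_L$ one has $C_a+(a-C_a)=a\in Q_L$ and $C_a-(a-C_a)=C_a+(C_a-a)\in Q_L$; the standard identity behind \cite[Proposition~5.2.3-(1)]{Mar08} then inductively gives $C_a^{2^d}\pm(a-C_a)^{2^d}\in Q_L$ for all $d\in\NN$, and $Q_L$--positivity of $L$ forces $C_{a-C_a}\leq C_a$. The explicit decomposition
\[
a=(C_a-C_{a-C_a})\cdot 1+\bigl(C_{a-C_a}+(a-C_a)\bigr)
\]
now exhibits $a$ as a non-negative scalar (a square) plus a single generator of $\tilde Q$, which is what you need.
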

\begin{proof}
It is easy to verify that~$Q_L$ is a quadratic module and~$L$ is $Q_L$--positive as $L(a)=L(1^2\cdot a)\geq 0$ for all~$a\in Q_L$. 
It remains to show that~$Q_L$ is Archimedean.
Let~$a,b\in A$ such that, w.l.o.g., $L(b^2)=1$ as well as $d\in\NN$ and recall that $C_a:=\sup_{n\in\NN}\sqrt[2n]{L(a^{2n})}=\sup_{d\in\NN}\sqrt[2^d]{L(a^{2^d})}$ by~\thref{thm::main-thm::rem1}-\eqref{rem::sup-power-generators::1}.
Then $L(a^{2^{d+1}})\leq C_a^{2^{d+1}}$ by definition and $\left|L(b\cdot ba)\right|^{2^{d+1}}\leq 1\cdot L(b^2a^{2^d})^2\leq L(b^4)L(a^{2^{d+1}})$ by repeated application of~\eqref{eq::CBS}.
Therefore, 
$$
L(b^2(C_a\pm a))= C_aL(b^2)\pm L(b^2a)\geq C_a(1-\sqrt[2^{d+1}]{L(b^4)})
$$
and so $L(b^2(C_a\pm a))\geq 0$ as~$d\in\NN$ was arbitrary, i.e., $C_a\pm a\in Q_L$.

Now, let $a\in Q_L$. 
Then $C_a+(a-C_a)=a$ and $C_a-(a-C_a)=C_a+(C_a-a)$ are in~$Q_L$ and so $C_a^2-(a-C_a)^2\in Q_L$ by \cite[Proposition~5.2.3-(1)]{Mar08}. %\cite[Lemma~12.6]{Smu17}.
Therefore, an easy induction shows that $C_a^{2^d}\pm (a-C_a)^{2^d}\in Q_L$ for all~$d\in\NN$ and so
$$
C_{a-C_a}:=\sup_{n\in\NN}\sqrt[2n]{L((a-C_a)^{2n})}=\sup_{d\in\NN}\sqrt[2^d]{L((a-C_a)^{2^d})}\leq C_a
$$
as~$L$ is $Q_L$--positive. Hence, the identity $a=(C_a-C_{a-C_a})+(C_{a-C_a}+(a-C_a))$ shows that~$Q_L$ is generated by~$C_a\pm a$ with $a\in A$.
\end{proof}

\noindent
Note that $\nn(Q_L)=K_L$ as ~$Q_L$ is generated by~$\sup_{n\in\NN}\sqrt[2n]{L(a^{2n})}\pm a$ with $a\in A$.

\subsection{The continuity condition}\label{sec::continuity}

In the following, we directly derive from the continuity condition \thref{char::compact}-\eqref{char::compact::3} all other conditions in~\thref{char::compact}.

\begin{lem}
\thlabel{lem::subm->growth}
Let $L\colon A\to\RR$ be a $\sum A^2$--positive linear functional  that is $p$--continuous for a submultiplicative seminorm~$p$ on~$A$.
Then $\sup_{n\in\NN}\sqrt[2n]{L(a^{2n})}<\infty$ for all~$a\in A$.
In particular, $p_L\leq p$.
\end{lem}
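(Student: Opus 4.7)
The plan is to derive the bound directly from the sharpened continuity inequality \eqref{lem::growth->subm::rem1::eq1}, which states that every $\sum A^2$-positive functional that is $p$-continuous with respect to a submultiplicative seminorm $p$ automatically satisfies $|L(a)|\leq 1\cdot p(a)$ for all $a\in A$ (the constant is $1$, not just some $C>0$). This is the essential input; the rest is combining it with submultiplicativity of $p$.

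First I would fix $a\in A$ and $n\in\NN$, and observe that $a^{2n}=(a^n)^2\in\sum A^2$, so $L(a^{2n})\geq 0$ by $\sum A^2$--positivity and therefore $\sqrt[2n]{L(a^{2n})}$ makes sense as a nonnegative real. Applying \eqref{lem::growth->subm::rem1::eq1} to $a^{2n}$ yields
\begin{equation*}
L(a^{2n})=|L(a^{2n})|\leq p(a^{2n}),
\end{equation*}
and submultiplicativity of $p$ gives $p(a^{2n})\leq p(a)^{2n}$. Taking the $2n$-th root produces $\sqrt[2n]{L(a^{2n})}\leq p(a)$.

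Since this bound is uniform in $n$, taking the supremum yields $p_L(a)=\sup_{n\in\NN}\sqrt[2n]{L(a^{2n})}\leq p(a)<\infty$ for every $a\in A$, establishing both the finiteness assertion and the inequality $p_L\leq p$.

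There is essentially no obstacle: the only subtlety worth flagging is that the constant in the continuity bound can be taken to be $1$, which is precisely the content of \eqref{lem::growth->subm::rem1::eq1} and relies on the $\sum A^2$--positivity through iterated application of \eqref{eq::CBS}. Without that refinement one would only get $\sqrt[2n]{L(a^{2n})}\leq C^{1/2n}p(a)\to p(a)$, which still works but via a limiting argument rather than a direct term-by-term estimate.
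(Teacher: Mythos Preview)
Your proof is correct and follows exactly the same route as the paper: apply the sharpened continuity bound \eqref{lem::growth->subm::rem1::eq1} to $a^{2n}$, use submultiplicativity of $p$, and take the $2n$-th root. The paper's version is terser (one sentence), but the content is identical; your added remark about the limiting argument when the constant is only $C>0$ is a nice observation not present in the original.
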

\begin{proof}
Let~$a\in A$. Then $L(a^{2n})\leq 1\cdot p(a)^{2n}$ for all~$n\in\NN$ by the sub\-mul\-ti\-pli\-ca\-tivity of~$p$ and \eqref{lem::growth->subm::rem1::eq1}, i.e., $p_L(a)=\sup_{n\in\NN}\sqrt[2n]{L(a^{2n})}\leq p(a)<\infty$.
\end{proof}

\noindent
The implication~\eqref{char::compact::3}$\Rightarrow$\eqref{char::compact::1} in~\thref{char::compact} follows from~\thref{thm::MP<->subm} below. 
\thref{thm::MP<->subm} was established in~\cite[Corollary~3.8]{GhKuMa14} using the well-known Jacobi Positivstellensatz (see~\cite[Theorem~4]{Jac01}) and a result from the theory of \emph{real} Banach algebras (see~\cite[Lemma~3.5]{GhKuMa14}). 
We provide an alternative proof that does not involve any Positivstellensatz but instead relies on the functional calculus for \emph{complex} Banach algebras.
Note that~\thref{thm::MP<->subm} also follows from~\thref{thm::MP<->growth} without appealing to neither any Positivstellensatz nor any theory of (real or complex) Banach algebras. 
Indeed, combining~\thref{lem::subm->growth,thm::MP<->growth} yields that~$\nu_L$ is the representing Radon measure for~$L$ and $\supp(\nu_L)\subseteq K_L=\sp(p_L)\subseteq\sp(p)$.

\begin{thm}
\thlabel{thm::MP<->subm}
Let $L\colon A\to\RR$ be a $\sum A^2$--positive linear functional  that is $p$--continuous for a submultiplicative seminorm~$p$ on~$A$.
Then there exists a re\-pre\-senting Radon measure~$\nu$ for~$L$ with $\supp(\nu)\subseteq\sp(p)$.
\end{thm}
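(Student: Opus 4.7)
The plan is to apply \thref{lem::char-conti} to upgrade $p$-continuity to $\|\cdot\|_{\sp(p)}$-continuity and then combine the Stone--Weierstrass theorem with the Riesz--Markov representation theorem to produce the desired Radon measure directly on the compact set $K:=\sp(p)$. Since $p$ is submultiplicative, $K$ is compact (as recalled before~\eqref{eq::char-Gs}) and, being compact in the Hausdorff space~$X(A)$, it is closed; hence extending a Radon measure on~$K$ by zero will yield a Radon measure on~$X(A)$ with support contained in~$K$.

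First, I would note that \thref{lem::char-conti} gives the bound $\left|L(a)\right|\leq\|a\|_K$ for all~$a\in A$, and that the Gelfand-type map $\Gamma\colon A\to C(K)$, $a\mapsto\hat{a}\!\restriction_K$, is a unital $\RR$-algebra homomorphism whose image separates the points of~$K$ (distinct characters on~$A$ are distinguished, by definition, by some element of~$A$). By the Stone--Weierstrass theorem, $\Gamma(A)$ is $\|\cdot\|_\infty$-dense in~$C(K)$. The above estimate then shows that the assignment $\Gamma(a)\mapsto L(a)$ is a well-defined $\|\cdot\|_\infty$-continuous linear functional on the dense subspace $\Gamma(A)$ of $C(K)$, which therefore extends uniquely to a continuous linear functional $\tilde{L}\colon C(K)\to\RR$.

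The main obstacle, and the heart of the argument, is showing that $\tilde{L}$ is \emph{positive}. Given $f\in C(K)$ with $f\geq 0$, I would set $g:=\sqrt{f}\in C(K)$ and, using Stone--Weierstrass again, pick a sequence $(a_n)_{n\in\NN}$ in~$A$ with $\hat{a_n}\!\restriction_K\to g$ uniformly on~$K$. Since $\Gamma$ is multiplicative, $\hat{a_n^2}\!\restriction_K=(\hat{a_n}\!\restriction_K)^2\to g^2=f$ uniformly on~$K$, and therefore
\[
\tilde{L}(f)=\lim_{n\to\infty}\tilde{L}(\hat{a_n^2}\!\restriction_K)=\lim_{n\to\infty}L(a_n^2)\geq 0
\]
by the $\sum A^2$-positivity of~$L$. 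This is the step where the three ingredients---$\sum A^2$-positivity of~$L$, density of $\Gamma(A)$ in~$C(K)$, and $\|\cdot\|_\infty$-continuity of~$\tilde{L}$---must be dovetailed, and I expect it to be the most delicate part of the argument.

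Finally, I would invoke the Riesz--Markov representation theorem on the compact Hausdorff space~$K$ to obtain a Radon measure~$\mu$ on~$K$ with $\tilde{L}(f)=\int_K f\dd\mu$ for all $f\in C(K)$. Extending $\mu$ by zero outside~$K$ (which is licit since $K$ is closed in~$X(A)$) produces a Radon measure~$\nu$ on~$X(A)$ with $\supp(\nu)\subseteq K=\sp(p)$ and
\[
L(a)=\tilde{L}(\hat{a}\!\restriction_K)=\int_K\hat{a}\!\restriction_K\dd\mu=\int_{X(A)}\hat{a}\dd\nu\qquad\text{for all }a\in A,
\]
which is the representation required by the theorem.
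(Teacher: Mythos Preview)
Your proof is correct and takes a genuinely different, somewhat more elementary route than the paper's. Both arguments begin with \thref{lem::char-conti} (the paper re-derives its setup rather than citing it) to pass from $p$-continuity to $\|\cdot\|_K$-continuity with $K=\sp(p)$. From there the paper works inside the complex Banach algebra completion $(B,q)$: given $a\in\Pos(K)$ and $\varepsilon>0$, it uses the holomorphic functional calculus to produce a Hermitian $b\in B$ with $b^2=\phi(a)+\varepsilon$, concludes that $L$ is $\Pos(K)$-positive, and then invokes the black box \cite[Theorem~3.3.2]{Mar08} (itself a Stone--Weierstrass/Riesz argument) to obtain the measure. You instead stay in $C(K)$: Stone--Weierstrass gives density of $\Gamma(A)$, and the positivity of the extension $\tilde L$ is obtained by approximating $\sqrt{f}$ rather than constructing an exact square root, so that only $\sum A^2$-positivity of $L$ (not full $\Pos(K)$-positivity) is needed before applying Riesz--Markov directly. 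Your approach trades the functional calculus for an approximation argument and unpacks the cited black box, making the proof more self-contained; the paper's route, on the other hand, isolates the intermediate fact that $L$ is $\Pos(K)$-positive, which is of independent interest (cf.\ \thref{lem::subm->Arch}).
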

\begin{proof}
As in the proof of~\thref{lem::char-conti} there exists a complex Banach algebra~$(B,q)$ and a homomorphism~$\phi\colon A\to B$ such that~$p=q\circ\phi$. 
By construction, for each $\beta\in\sp(q)\subseteq X(B)$ there exists $\alpha\in\sp(p)\subseteq X(A)$ such that $\beta\circ\phi=\alpha$. 
In particular, since~$L$ is $p$--continuous, there exists a unique linear functional $\overline{L}\colon B\to\CC$ such that $L=\overline{L}\circ\phi$.
By construction, $\overline{L}$ is non-negative on Hermitian squares of~$B$.

Now, let~$a\in\Pos(\sp(p))$ and $\varepsilon>0$. 
Then $\phi(a)\in\Pos(\sp(q))$ by construction and the spectrum of $\phi(a)+\varepsilon$ in the sense of complex Banach algebras (see~\cite[II, \S16, Proposition~9 (p.~81)]{BoDu73}), i.e., $\{\beta(\phi(a))+\varepsilon : \beta\in\sp(q)\}$, is contained in~$[\varepsilon,\infty)$. 
Therefore, by the functional calculus for complex Banach algebras (cf.\ \cite[I, \S7, Theorem~4 (p.~33)]{BoDu73}), there exists Hermitian~$b\in B$ such that $\phi(a)+\varepsilon=b^2$ and so 
$$
L(a)+\varepsilon = L(a+\varepsilon) = \overline{L}(\phi(a)+\varepsilon) = \overline{L}(b^2)\geq 0,
$$
i.e., $L(a)\geq 0$ as~$\varepsilon>0$ was arbitrary. 
Thus, $L$ is $\Pos(\sp(p))$--positive and so, by~\cite[Theorem~3.3.2]{Mar08}, there exists representing Radon measure~$\nu$ for~$L$ with $\supp(\nu)\subseteq\sp(p)$. Recall that $\sp(p)\subseteq X(A)$ is compact as~$p$ is submultiplicative.
\end{proof}

\begin{lem}
\thlabel{lem::subm->Arch}
Let $L\colon A\to\RR$ be a $\sum A^2$--positive linear functional  that is $p$--continuous for a submultiplicative seminorm~$p$ on~$A$.
Then $L$ is $\Pos(\sp(p))$--positive.
\end{lem}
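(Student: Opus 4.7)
The plan is to reduce to the sharpest available continuity bound and then exploit the fact that $\Pos(\sp(p))$ behaves well under translation by the sup-norm on~$\sp(p)$. By \thref{lem::char-conti}, the hypothesis that $L$ is $p$--continuous already upgrades to $L$ being $\|\cdot\|_{\sp(p)}$--continuous; combined with the iteration of~\eqref{eq::CBS} expressed in~\eqref{lem::growth->subm::rem1::eq1}, the optimal constant can be taken to be~$1$, giving
\[
\left|L(a)\right|\leq\|a\|_{\sp(p)}\qquad\text{for all }a\in A.
\]
This is the key input: once we control $L$ by the sup-norm on a compact subset of $X(A)$, positivity on $\Pos(\sp(p))$ should follow purely from the linear estimate.

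Next, I would fix $a\in\Pos(\sp(p))$ and consider $b:=\|a\|_{\sp(p)}-a$. Since $\hat a(\alpha)\in[0,\|a\|_{\sp(p)}]$ for every $\alpha\in\sp(p)$, the translated element satisfies $\hat b(\alpha)\in[0,\|a\|_{\sp(p)}]$ as well, so $\|b\|_{\sp(p)}\leq \|a\|_{\sp(p)}$. Applying the bound above to $b$ gives $|L(b)|\leq \|a\|_{\sp(p)}$, i.e.,
\[
\|a\|_{\sp(p)}-L(a)\leq \|a\|_{\sp(p)},
\]
which yields $L(a)\geq 0$, as desired.

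The main (very mild) obstacle is making sure that the constant in the continuity bound produced by \thref{lem::char-conti} is exactly~$1$ rather than some arbitrary $C>0$; this is where~\eqref{eq::CBS} is essential, since raising to powers of~$2$ and using submultiplicativity kills any multiplicative constant in the limit, as already recorded in~\eqref{lem::growth->subm::rem1::eq1}. Once that point is made, the argument reduces to the two-line translation trick above. Alternatively, one can simply extract this statement directly from the intermediate step in the proof of \thref{thm::MP<->subm}, where $\Pos(\sp(p))$--positivity of $L$ is already established via the functional calculus on the complex Banach algebra $(B,q)$ obtained as the completion and complexification of the canonical quotient of $(A,p)$; that route avoids \thref{lem::char-conti} altogether but requires the heavier machinery of complex Banach algebras.
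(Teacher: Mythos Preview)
Your proof is correct and follows essentially the same approach as the paper: both invoke \thref{lem::char-conti} together with~\eqref{lem::growth->subm::rem1::eq1} to get $|L(\cdot)|\leq\|\cdot\|_{\sp(p)}$, and then use the translation $\|a\|_{\sp(p)}-a$ (the paper writes $a-\|a\|_{\sp(p)}$, an immaterial sign change) to conclude $L(a)\geq 0$ from the sup-norm bound. Your remark about the alternative route via the functional calculus in the proof of \thref{thm::MP<->subm} is also accurate but, as you note, heavier than needed here.
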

\begin{proof}
Set $K:=\sp(p)$. Recall that~$\Pos(K)$ is Archimedean as~$K$ is compact and that~$L$ is $\|\cdot\|_K$--continuous by~\thref{lem::char-conti}. 
Let~$a\in\Pos(K)$. 
Then $\|a-\|a\|_K\|_K\leq\|a\|_K$ as $\left|\alpha(a-\|a\|_K)\right|\leq \|a\|_K$ for all~$\alpha\in K$ and so $\left|L(a-\|a\|_K)\right|\leq 1\cdot\|a-\|a\|_K\|_K$ by~\eqref{lem::growth->subm::rem1::eq1}. 
Therefore,
$$
\|a\|_K + L(a-\|a\|_K) \geq \|a\|_K - \|a-\|a\|_K\|_K\geq 0,
$$
i.e., $L(a)\geq 0$.
\end{proof}

\noindent
Given a $\sum A^2$--positive linear functional $L\colon A\to\RR$ that is $p$--continuous for a submultiplicative seminorm~$p$ on~$A$, it is also possible to show that~$L$ positive on the (Archimedean) quadratic module generated by~$p(a)\pm a$ with~$a\in A$ (cf.\ \thref{lem::growth->Arch}).
%\footnote{
%To show the positivity, let~$b\in A$ with $L(b^2)=1$ and set $L_{b^2}(a):=L(b^2a)$ for all~$a\in A$. Let~$a\in A$. 
%Then $L_{b^2}$ is $p$--positive as $\left|L(b^2a)\right|\leq\sqrt{L(b^4)}p(a)$ by~\eqref{eq::CBS} and the continuity of~$L$ and so $\left|L(b^2a)\right|\leq 1\cdot
%p(a)=L(b^2)p(a)$ by~\eqref{lem::growth->subm::rem1::eq1}.
%}
The closure of this quadratic module w.r.t.\ the finest locally convex topology on~$A$ is equal to~$\Pos(\sp(p))$ as a consequence of the Jacobi Positiv\-stellensatz (see~\cite[Theorem~4]{Jac01}).
It is not clear to us if the quadratic module itself is equal to~$\Pos(\sp(p))$.\par\medskip

\subsection{The positivity condition}\label{sec::positivity}

In the following, we directly derive from the positivity condition~\thref{char::compact}-\eqref{char::compact::4} all other conditions in~\thref{char::compact}.

\begin{lem}
\thlabel{lem::Arch->growth}
Let $L\colon A\to\RR$ be a linear functional that is $Q$--positive for an Archimedean quadratic module~$Q$ in~$A$.
Then $\sup_{n\in\NN}\sqrt[2n]{L(a^{2n})}<\infty$ for all~$a\in A$.
In particular, $Q\subseteq Q_L$.
\end{lem}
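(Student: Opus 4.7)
My plan is to deduce the growth bound by combining the compactness of $\nn(Q)$ (guaranteed by the Archimedean property of~$Q$) with \thref{lem::char-posi}, which upgrades $Q$--positivity of $L$ to $\Pos(\nn(Q))$--positivity. The key observation is that for any $a\in A$, the polynomial expressions of the form $M^{2n}-a^{2n}$ with $M$ a uniform bound of $|\hat a|$ on $\nn(Q)$ lie in $\Pos(\nn(Q))$, which immediately yields the desired estimate on $L(a^{2n})$.

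More concretely, fix $a\in A$. Since~$Q$ is Archimedean, the non-negativity set $\nn(Q)\subseteq X(A)$ is compact (as recalled in the Preliminaries), and the continuous function $\hat a$ attains a maximum on it. I would therefore set
\[
M_a:=\max_{\alpha\in\nn(Q)}|\alpha(a)|<\infty.
\]
For every $n\in\NN$ and every $\alpha\in\nn(Q)$ we have $\alpha(a)^{2n}\leq M_a^{2n}$, so $M_a^{2n}-a^{2n}\in\Pos(\nn(Q))$. By \thref{lem::char-posi}, $L$ is $\Pos(\nn(Q))$--positive, hence
\[
L(a^{2n})\leq M_a^{2n}\qquad\text{for all }n\in\NN,
\]
and consequently $\sup_{n\in\NN}\sqrt[2n]{L(a^{2n})}\leq M_a<\infty$, as required.

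For the additional claim $Q\subseteq Q_L$, I would simply unravel the definition~\eqref{lem::growth->Arch::eq1}: if $a\in Q$ and $b\in A$, then $b^2a\in A^2Q\subseteq Q$, so $L(b^2a)\geq 0$ by $Q$--positivity of $L$, proving $a\in Q_L$.

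The only conceptual step is invoking \thref{lem::char-posi} (i.e.\ the Jacobi Positivstellensatz) to pass from $Q$--positivity to $\Pos(\nn(Q))$--positivity; everything else is a one-line bound using compactness of~$\nn(Q)$ and the definition of~$Q_L$, so I do not anticipate any serious obstacle.
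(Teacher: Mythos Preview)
Your argument is correct, but it takes a genuinely different route from the paper's proof. The paper argues purely algebraically: by Archimedeanity there exists $N\in\NN$ with $N\pm a\in Q$, and then an easy induction (as in the proof of \thref{lem::growth->Arch}, using the identity $N^{2^{d+1}}-a^{2^{d+1}}=(N^{2^d}-a^{2^d})(N^{2^d}+a^{2^d})$ together with \cite[Proposition~5.2.3-(1)]{Mar08}) gives $N^{2^d}-a^{2^d}\in Q$ for all $d$, whence $L(a^{2^d})\leq N^{2^d}$ and one concludes via \thref{thm::main-thm::rem1}-\eqref{rem::sup-power-generators::1}. The treatment of $Q\subseteq Q_L$ is identical to yours.

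The key difference is that your proof invokes \thref{lem::char-posi}, which rests on the Jacobi Positivstellensatz, whereas the paper's proof is completely elementary and Positivstellensatz-free. This matters for the architecture of the paper: one of its points is that the equivalence \eqref{char::compact::2}$\Leftrightarrow$\eqref{char::compact::4} in \thref{char::compact}, and with it \thref{thm::MP<->Arch}, can be obtained via \thref{thm::MP<->growth} \emph{without} appealing to any Positivstellensatz. Your route, while shorter to write down and yielding the sharper constant $M_a=\|a\|_{\nn(Q)}$ instead of an arbitrary Archimedean bound $N$, sacrifices this independence.
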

\begin{proof}
Let~$a\in A$. Then there exists~$N\in\NN$ such that $N\pm a\in Q$ by the Archimedianity of~$Q$ and so $N^{2^d}\pm a^{2^d}\in Q$ for all $d\in\NN$ as in the proof of \thref{lem::growth->Arch}. Therefore, $\sup_{n\in\NN}\sqrt[2n]{L(a^{2n})}=\sup_{d\in\NN}\sqrt[2^d]{L(a^{2^d})}\leq N<\infty$ by \thref{thm::main-thm::rem1}-\eqref{rem::sup-power-generators::1} as~$L$ is $Q$--positive. 

Now, let~$a\in Q$. Then $L(b^2a)\geq 0$ for all~$b\in A$ as~$b^2a\in Q$, i.e., $a\in Q_L$.
\end{proof}

\noindent
The implication~\eqref{char::compact::4}$\Rightarrow$\eqref{char::compact::1} in~\thref{char::compact} follows from~\thref{thm::MP<->Arch} below. 
\thref{thm::MP<->Arch}~was established in, e.g.,~\cite[Corollary~3.3]{Mar03} using the Jacobi Positivstellensatz (see~\cite[Theorem~4]{Jac01}). 
Note that~\thref{thm::MP<->Arch} also follows from~\thref{thm::MP<->growth} without appealing to any Positivstellensatz. 
Indeed, combining~\thref{lem::Arch->growth,thm::MP<->growth} yields that~$\nu_L$ is the representing Radon measure for~$L$ and $\supp(\nu_L)\subseteq K_L=\nn(Q_L)\subseteq\nn(Q)$.

\begin{thm}
\thlabel{thm::MP<->Arch}
Let $L\colon A\to\RR$ be a linear functional that is $Q$--positive for an Archimedean quadratic module~$Q$ in~$A$.
Then there exists a representing Radon measure~$\nu$ for~$L$ with $\supp(\nu)\subseteq\nn(Q)$.
\end{thm}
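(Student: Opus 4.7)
The strategy is to derive \thref{thm::MP<->Arch} directly as a corollary of the growth-based characterization \thref{thm::MP<->growth}, following exactly the route sketched in the paragraph preceding the statement, so that no appeal to the Jacobi Positivstellensatz is required.

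First I would note that every quadratic module in $A$ contains $\sum A^2$, so $Q$-positivity of $L$ immediately entails $L(A^2)\subseteq[0,\infty)$; in particular \eqref{eq::CBS} is available. Then I would apply \thref{lem::Arch->growth} to the Archimedean quadratic module $Q$, which delivers two conclusions in one shot: the growth condition
$$
\sup_{n\in\NN}\sqrt[2n]{L(a^{2n})}<\infty \qquad\text{for all }a\in A,
$$
and the inclusion $Q\subseteq Q_L$, where $Q_L$ is the quadratic module defined in \eqref{lem::growth->Arch::eq1}.

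With the growth condition secured, \thref{thm::MP<->growth} produces a unique representing Radon measure $\nu_L$ for $L$ whose support is compact and contained in the set $K_L$ from \eqref{thm::MP<->growth::eq2}. By the remark immediately following \thref{lem::growth->Arch}, the explicit system of generators $\sup_{n\in\NN}\sqrt[2n]{L(a^{2n})}\pm a$ of $Q_L$ yields $\nn(Q_L)=K_L$. Combining this identity with the elementary monotonicity $Q\subseteq Q_L\Rightarrow\nn(Q_L)\subseteq\nn(Q)$, I obtain
$$
\supp(\nu_L)\subseteq K_L=\nn(Q_L)\subseteq\nn(Q),
$$
which is exactly the desired localization.

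There is essentially no obstacle left at this stage: the genuine work has been absorbed into \thref{lem::Arch->growth}, which exploits the Archimedean property (witnesses $N$ with $N\pm a\in Q$, bootstrapped via $N^{2^d}\pm a^{2^d}\in Q$) to force the growth bound, and into the proof of \thref{thm::MP<->growth}, which constructs $\nu_L$ and localizes it in $K_L$ via the Prokhorov condition. The only subtlety worth re-verifying is the equality $K_L=\nn(Q_L)$, but this is immediate from the explicit description of the generators of $Q_L$ given in \thref{lem::growth->Arch}; it is precisely for this bookkeeping reason that \thref{lem::growth->Arch} is stated with the generator set made explicit.
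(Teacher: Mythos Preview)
Your proposal is correct and follows exactly the Positivstellensatz-free route that the paper itself sketches in the paragraph immediately preceding \thref{thm::MP<->Arch}: combine \thref{lem::Arch->growth} with \thref{thm::MP<->growth} to obtain $\supp(\nu_L)\subseteq K_L=\nn(Q_L)\subseteq\nn(Q)$. The paper does not supply a separate proof block for this theorem beyond that sketch (it also cites the classical route via~\cite[Corollary~3.3]{Mar03} and the Jacobi Positivstellensatz), so your write-up simply unpacks the intended argument.
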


\begin{lem}
\thlabel{lem::Arch->subm}
Let $L\colon A\to\RR$ be a linear functional that is $Q$--positive for an Archimedean quadratic module~$Q$ in~$A$.
Then $L$ is $\|\cdot\|_{\nn(Q)}$--continuous.
\end{lem}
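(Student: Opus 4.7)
The plan is to reduce this to an application of \thref{lem::char-posi}. Since $Q$ is Archimedean, the set $K:=\nn(Q)$ is compact in $X(A)$, so $\|\cdot\|_K$ is a well-defined submultiplicative seminorm. I aim to prove the sharp bound $|L(a)|\le \|a\|_K$ for all $a\in A$, which immediately yields $\|\cdot\|_K$-continuity (with constant $1$).

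First I would invoke \thref{lem::char-posi}: since $L$ is $Q$-positive and $Q$ is Archimedean, $L$ is $\Pos(\nn(Q))=\Pos(K)$-positive. Next, fix $a\in A$ and consider the two elements $\|a\|_K\pm a\in A$. For every $\alpha\in K$ we have
\begin{equation*}
\alpha\bigl(\|a\|_K\pm a\bigr)=\|a\|_K\pm\alpha(a)\geq \|a\|_K-|\alpha(a)|\geq 0,
\end{equation*}
where the last inequality is by the very definition of $\|a\|_K=\max_{\alpha\in K}|\alpha(a)|$. Hence $\|a\|_K\pm a\in\Pos(K)$. Applying the $\Pos(K)$-positive functional $L$ and using the standing normalization $L(1)=1$ yields
\begin{equation*}
0\leq L\bigl(\|a\|_K\pm a\bigr)=\|a\|_K\pm L(a),
\end{equation*}
i.e.\ $|L(a)|\leq \|a\|_K$, as required.

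There is essentially no obstacle here, since the heavy lifting has already been done in \thref{lem::char-posi} (which, via the Jacobi Positivstellensatz, upgrades $Q$-positivity to $\Pos(\nn(Q))$-positivity). The only thing worth emphasising is that the constant in the continuity estimate is exactly $1$, which matches the analogous sharp bound in \thref{lem::char-conti} and is consistent with the identity $\|\cdot\|_{\supp(\nu_L)}=p_L$ announced in \thref{char::compact}.
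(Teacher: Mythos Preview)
Your proof is correct and follows essentially the same approach as the paper: invoke \thref{lem::char-posi} to upgrade $Q$--positivity to $\Pos(K)$--positivity with $K=\nn(Q)$, observe that $\|a\|_K\pm a\in\Pos(K)$, and conclude $|L(a)|\leq\|a\|_K$. Your version simply spells out in more detail why $\|a\|_K\pm a\in\Pos(K)$ and makes explicit the use of $L(1)=1$.
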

\begin{proof}
Set $K:=\nn(Q)$. 
Recall that $\|\cdot\|_K$ is submultiplicative as~$K$ is compact and that~$L$ is $\Pos(K)$--positive by~\thref{lem::char-posi}.
Let~$a\in A$. 
Then $\|a\|_K\pm a\in\Pos(K)$ and so $L(\|a\|_K\pm a)\geq 0$, i.e, $\left|L(a)\right|\leq\|a\|_K$.
\end{proof}

\noindent
Given a linear functional $L\colon A\to\RR$ that is $Q$--positive for an Archimedean quadratic module~$Q$ in~$A$, is also possible to show that~$L$ is continuous w.r.t.\ the submultiplicative seminorm given by~$a\mapsto\inf\{s\geq 0:s\pm a\in Q\}$ (cf.\ \cite[Theorem~10.5]{Smu20}).
%\footnote{
%To show the submultiplicativity, note that $4((1+\varepsilon)\pm ab) = (a\pm b)^2+ (4(1+\varepsilon)-(a\mp b)^2)$
%yields that $(1+\varepsilon)\pm ab\in Q$ for all~$\varepsilon>0$ and all~$a,b\in A$ such that $(1+\varepsilon)\pm a, (1+\varepsilon)\pm b\in Q$.
%}
This seminorm is equal to~$\|\cdot\|_{\nn(Q)}$ as a consequence of the Jacobi Positiv\-stellensatz (see~\cite[Theorem~4]{Jac01}).\par\medskip

\subsection{Localization of the support}\label{sec::support}

In the following, we analyze in detail the support of the representing Radon measure in~\thref{char::compact}-\eqref{char::compact::1}.

\begin{cor}
\thlabel{char::compact::support}
Let $L\colon A\to\RR$ be represented by the Radon measure $\nu_L$ with compact support~$K$. 
Then $\|\cdot\|_K=p_L$ and $\Pos(K)=Q_L$ as well as
$$
K=K_L=\sp(p_L)=\nn(Q_L).
$$
%(for the definitions of $K_L$, $p_L$, and $Q_L$ see~\eqref{thm::MP<->growth::eq2}, \eqref{lem::growth->subm::eq1}, and \eqref{lem::growth->Arch::eq1}, respectively).
\end{cor}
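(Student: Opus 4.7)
The plan is to prove $p_L = \|\cdot\|_K$ first; once this equality is in hand, every other identity in the corollary falls out from the preparatory results established in Sections~\ref{sec::char}.

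To obtain $p_L = \|\cdot\|_K$, I would invoke Remark~\ref{thm::main-thm::rem1}(iii), which gives $p_L(a) = \|\hat{a}\|_\infty$, where $\|\cdot\|_\infty$ denotes the essential supremum on $X(A)$ with respect to $\nu_L$, for each $a\in A$. Since $\hat{a}\colon X(A)\to\RR$ is continuous in $\tau_{X(A)}$ and $\supp(\nu_L)=K$, the essential supremum coincides with $\max_{\alpha\in K}|\hat{a}(\alpha)| = \|a\|_K$: one direction uses $\nu_L(X(A)\setminus K)=0$, so any $M\geq \|a\|_K$ forces $\{|\hat{a}|>M\}\subseteq X(A)\setminus K$ and hence is $\nu_L$-null; the other uses that every open neighborhood of a point in $\supp(\nu_L)$ has positive $\nu_L$-mass, so for $M<\|a\|_K$ continuity produces an open set on which $|\hat{a}|>M$ meeting $K$, of positive measure.

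With $p_L = \|\cdot\|_K$ in place, the chain $K = K_L = \sp(p_L) = \nn(Q_L)$ is immediate. Since $p_L$ is submultiplicative, \eqref{eq::char-Gs} yields $K_L = \sp(p_L)$ directly from the definitions, and Proposition~\ref{prop::sp-vs-max}(i) applied to the compact set $K$ gives $\sp(\|\cdot\|_K) = K$. The identity $\nn(Q_L) = K_L$ was already recorded in the comment after Lemma~\ref{lem::growth->Arch}, closing the chain.

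Finally, for $\Pos(K) = Q_L$: the inclusion $\Pos(K)\subseteq Q_L$ follows by direct integration---for $a\in\Pos(K)$ and any $b\in A$, the integrand $\widehat{b^2 a} = \hat{b}^2\hat{a}$ is non-negative on $K=\supp(\nu_L)$, so $L(b^2 a) = \int \hat{b}^2 \hat{a}\,\dd\nu_L \geq 0$ and hence $a\in Q_L$. The converse combines Proposition~\ref{prop::nn-vs-Pos}(ii) with the already-established $\nn(Q_L)=K$ to give $Q_L\subseteq\Pos(\nn(Q_L)) = \Pos(K)$. The only genuine subtlety in the whole argument is the passage from essential supremum on all of $X(A)$ to the maximum on the compact support $K$; everything else is bookkeeping built on previously established results.
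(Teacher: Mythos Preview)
Your proof is correct, but the paper reaches $p_L=\|\cdot\|_K$ by a different route. Rather than invoking \thref{thm::main-thm::rem1}(iii) and the $\L^p\to\L^\infty$ fact, the paper argues purely through the machinery of Sections~\ref{sec::continuity} and~\ref{sec::positivity}: since $L$ is $p_L$--continuous, \thref{thm::MP<->subm} gives $K\subseteq\sp(p_L)$, whence $\|\cdot\|_K\leq p_L$ by \thref{prop::sp-vs-max}-\eqref{prop::sp-vs-max::2}; conversely, $L$ is trivially $\|\cdot\|_K$--continuous (as $\nu_L$ is supported on $K$), so \thref{lem::subm->growth} yields $p_L\leq\|\cdot\|_K$. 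The identity $\Pos(K)=Q_L$ is then obtained ``analogously'' via \thref{thm::MP<->Arch}, \thref{prop::nn-vs-Pos}-\eqref{prop::nn-vs-Pos::2}, and \thref{lem::Arch->growth}, rather than by your direct-integration argument. Your approach is more self-contained and measure-theoretic---it needs only continuity of $\hat{a}$ and the definition of support---whereas the paper's approach deliberately routes through all three characterizations (growth, continuity, positivity), underscoring the structural point that the corollary is the place where these threads meet.
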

\begin{proof}
The seminorm~$p_L$ is well-defined by~\thref{thm::MP<->growth,lem::growth->subm}.
Further, $K\subseteq\sp(p_L)$ by~\thref{thm::MP<->subm}. 
Therefore, $\|\cdot\|_K\leq p_L$ by~\thref{prop::sp-vs-max}-\eqref{prop::sp-vs-max::2}. 
Since~$\nu_L$ is representing for~$L$, clearly, $L$ is $\|\cdot\|_K$--continuous and so $p_L\leq\|\cdot\|_K$ by~\thref{lem::subm->growth}. 
Hence, $\|\cdot\|_K=p_L$.
Analogously, $\Pos(K)=Q_L$.

Recall that $K_L=\sp(p_L)=\nn(Q_L)$ and note that $\sp(\|\cdot\|_K)=\sp(p_L)$ as well as $\nn(\Pos(K))=\nn(Q_L)$ by the first part of the proof. This together with~Propositions~\ref{prop::nn-vs-Pos}-\eqref{prop::nn-vs-Pos::1} and~\ref{prop::sp-vs-max}-\eqref{prop::sp-vs-max::1} yields the assertion.
\end{proof}

\noindent
The following result shows how to compute the measure of singletons in~$\supp(\nu_L)$.
For compact $K\subseteq X(A)$ and $\alpha\in K$, we denote by $[\alpha]_K$ the set of all $a\in A$ for which $\hat{a}\!\restriction_K$ attains its maximum at $\alpha$, i.e., $[\alpha]_K:=\{a\in A:\|a\|_K=\left|\alpha(a)\right|\}$. 
Note that $A=\bigcup_{\alpha\in K}[\alpha]_K$.

\begin{thm}
\thlabel{thm::thm}
Let $L\colon A\to\RR$ be represented by the Radon measure $\nu_L$ with compact support $K$ and let $d\in\NN$. 
Then
$$
\nu_L(\{\alpha\})=\max\{\lambda\in [0,1]: \sqrt[2^d]{\lambda} p_L \leq p_d \text{ on }[\alpha]_K\}\qquad\text{ for all }\alpha\in K.
$$
\end{thm}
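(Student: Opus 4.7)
Write $m:=\nu_L(\{\alpha\})$ and let $S$ denote the set on the right-hand side of the asserted equation. My plan is to show $\max S=m$. First I would observe that $S$ is a closed, downward-closed subset of $[0,1]$ (the defining inequalities $\lambda\cdot p_L(a)^{2^d}\leq p_d(a)^{2^d}$ are monotone and continuous in $\lambda$), hence $S=[0,\max S]$. The inclusion $m\in S$, giving $\max S\geq m$, is immediate from the representing property: for any $a\in A$,
\begin{equation*}
p_d(a)^{2^d}=\int_K\hat{a}^{2^d}\,\dd\nu_L\geq\hat{a}(\alpha)^{2^d}\,\nu_L(\{\alpha\})=m\,\alpha(a)^{2^d},
\end{equation*}
and for $a\in[\alpha]_K$ we have $|\alpha(a)|=\|a\|_K=p_L(a)$ by~\thref{char::compact::support}.

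For the reverse inequality $\max S\leq m$, my strategy is to construct, for every $\varepsilon>0$, an element $a\in[\alpha]_K$ with $\|a\|_K=1$ and $L(a^{2^d})\leq m+\varepsilon$; then any $\lambda\in S$ satisfies $\lambda=\lambda\,\|a\|_K^{2^d}\leq L(a^{2^d})\leq m+\varepsilon$, and letting $\varepsilon\downarrow 0$ closes the argument. The construction has four preparatory ingredients. First, by inner regularity of $\nu_L$ applied to the open set $K\setminus\{\alpha\}$, I would pick a compact $C\subseteq K\setminus\{\alpha\}$ such that $\nu_L(U\setminus\{\alpha\})<\varepsilon/2$ for $U:=K\setminus C$. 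Second, Urysohn's lemma on the compact Hausdorff space $K$ supplies $g\in C(K,[0,1])$ with $g(\alpha)=1$ and $g|_C=0$. Third, since $\{\hat{a}|_K:a\in A\}$ is a unital subalgebra of $C(K,\RR)$ separating the points of $K\subseteq X(A)$, the Stone--Weierstrass theorem yields $b\in A$ with $\|\hat{b}-g\|_\infty<\delta$ for any prescribed $\delta>0$. Fourth, after shifting $b$ by the constant $1-\hat{b}(\alpha)$ I may assume $\hat{b}(\alpha)=1$ with $\|\hat{b}-g\|_\infty\leq 2\delta$.

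The main obstacle will be to upgrade $b$ to an actual element of $[\alpha]_K$, since an arbitrary approximation need not attain its sup-norm on $K$ at $\alpha$. The key trick is to set
\begin{equation*}
a:=2b-b^2\in A,\qquad\text{whence}\qquad\hat{a}=1-(1-\hat{b})^2.
\end{equation*}
Then $\hat{a}(\alpha)=1$ and $\hat{a}\leq 1$ on $K$; since $\hat{b}\in[-2\delta,1+2\delta]$ on $K$, a direct estimate shows $|\hat{a}|\leq 1$ on $K$ for $\delta$ sufficiently small, so $\|a\|_K=1=\alpha(a)$ and $a\in[\alpha]_K$. Moreover $|\hat{b}|\leq 2\delta$ on $C$, hence $|\hat{a}|\leq 5\delta$ on $C$. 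Splitting the integral over $\{\alpha\}$, $U\setminus\{\alpha\}$ and $C$ then gives
\begin{equation*}
L(a^{2^d})=\int_K\hat{a}^{2^d}\,\dd\nu_L\leq m+\nu_L(U\setminus\{\alpha\})+(5\delta)^{2^d}\nu_L(C)\leq m+\varepsilon/2+(5\delta)^{2^d},
\end{equation*}
and choosing $\delta$ small enough that $(5\delta)^{2^d}\leq\varepsilon/2$ completes the construction.
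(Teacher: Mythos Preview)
Your argument is correct, and the easy direction matches the paper's exactly. The hard direction, however, proceeds along a genuinely different line.

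The paper avoids both Urysohn's lemma and Stone--Weierstrass. It uses \emph{outer} regularity and the concrete basis $\{U(b):b\in A\}$ of $\tau_{X(A)}$ to find a single $b\in A$ with $\alpha\in U(b)$ and $\nu_L(U(b))\leq\nu_L(\{\alpha\})+\varepsilon$, then builds the peaking element purely algebraically as
\[
a_\varepsilon:=\Bigl(1-\tfrac{\varepsilon(\alpha(b)-b)^2}{p_L(2b)^2}\Bigr)^{2n}
\]
for $n$ large enough. This is automatically $1$ at $\alpha$, at most $1$ on all of $K$, and can be made $\leq\sqrt[2^d]{\varepsilon}$ on $K\setminus U(b)$ by choosing $n$. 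So membership in $[\alpha]_K$ falls out for free, with no approximation step needed.

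Your route---inner regularity to isolate a compact $C\subseteq K\setminus\{\alpha\}$, Urysohn to produce a continuous bump, Stone--Weierstrass to realize it approximately in~$A$, and the trick $a=1-(1-b)^2$ to force $\hat a\leq 1$ with equality at $\alpha$---is more modular and perhaps more transparent in its intent, at the cost of invoking two external theorems. The paper's construction is more self-contained and exploits directly that the algebra itself already furnishes a basis for the topology; your construction treats $K$ as an abstract compact Hausdorff space and then returns to~$A$ only at the end. Both yield the same estimate $L(a^{2^d})\leq\nu_L(\{\alpha\})+O(\varepsilon)$ after splitting the integral over $\{\alpha\}$, a small open neighbourhood, and its complement.
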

\begin{proof}
Let $\alpha\in K$ and recall that $p_L=\|\cdot\|_K$ by~\thref{char::compact::support}.

Now, let $a\in [\alpha]_K$ and set $\lambda_\alpha:=\nu_L(\{\alpha\})\in [0,1]$.
Then 
$$
\lambda_\alpha p_L(a)^{2^d} = \nu_L(\{\alpha\}) \left|\alpha(a)\right|^{2^d} \leq \int_K \alpha(a)^{2^d} \dd\nu_L = L(a^{2^d})
$$
yields that $\sqrt[2^d]{\lambda_\alpha} p_L(a)\leq p_d(a)$, i.e., $\lambda_\alpha\in\{\lambda\in [0,1]: \sqrt[2^d]{\lambda} p_L \leq p_d \text{ on }[\alpha]_K\}$.

Conversely, let $\lambda\in [0,1]$ be such that $\sqrt[2^d]{\lambda} p_L \leq p_d$ on $[\alpha]_K$ and let~$0<\varepsilon\leq 1$.
Since $\nu_L$ is outer regular and sets of the form $U(b)=\{\alpha\in X(A): \hat{b}(\alpha)>0\}$ with $b\in A$ are a basis of~$\tau_{X(A)}$ (cf.\ \eqref{eq::basis-weak-top}), there exists~$b\in A$ such that $\alpha\in U(b)$ and $\nu_L(U(b))\leq\nu_L(\{\alpha\})+\varepsilon$. 
Set $U:=U(b)$ and note that $p_L(b)=\|b\|_K\geq\left|\alpha(b)\right|>0$. Let~$n\in\NN$ be such that
$$
\left(1-\frac{\varepsilon\alpha(b)^2}{p_L(2b)^2}\right)^{2n}\leq\sqrt[2^d]{\varepsilon}\qquad\text{and set}\qquad a_\varepsilon:=\left(1-\frac{\varepsilon(\alpha(b)-b)^2}{p_L(2b)^2}\right)^{2n}.
$$
By construction, $p_L(a_\varepsilon)=\|a_\varepsilon\|_K=1$ as $\alpha(a_\varepsilon)=1$ and $\left|\alpha(b)-\beta(b)\right|\leq 2p_L(b)$ for all $\beta\in K$, i.e., $a_\varepsilon\in [\alpha]_K$, as well as $\beta(a_\varepsilon)\leq\sqrt[2^d]{\varepsilon}$ for all $\beta\in K\setminus U$.
Thus,
\begin{alignat*}{6}
p_d(a_\varepsilon)^{2^d} &= \int_{K\setminus U}\hat{a}_\varepsilon^{2^d}\dd\nu_L &&+ \int_{K\cap (U\setminus\{\alpha\})}\hat{a}_\varepsilon^{2^d}\dd\nu_L &&+ \int_{\{\alpha\}}\hat{a}_\varepsilon^{2^d}\dd\nu_L\\
&\leq \varepsilon\cdot\nu_L(K\setminus U) &&+ 1\cdot\nu_L(U\setminus\{\alpha\}) &&+ \nu_L(\{\alpha\})\\
&\leq \varepsilon\cdot 1 &&+ 1\cdot\varepsilon &&+ \nu_L(\{\alpha\}).
\end{alignat*}
Therefore, $\lambda\cdot 1= \lambda p_L(a_\varepsilon)^{2^d}\leq p_d(a_\varepsilon)^{2^d}\leq 2\varepsilon + \nu_L(\{\alpha\})$ and hence, $\nu_L(\{\alpha\})\geq \lambda$ as $0<\varepsilon\leq 1$ was arbitrary. 
\end{proof}

\noindent
Let $L\colon A\to\RR$ be represented by the Radon measure $\nu_L$ with compact support~$K$.
If $\nu_L(\{\alpha\})>0$ for all $\alpha\in K$, then $K$ is countable as $\nu_L(K)=L(1)=1$ while the converse is false in general.

\begin{exmp}
Consider the polynomial algebra $\RR[X]$ and the Radon measure~$\nu$ on~$X(\RR[X])\simeq \RR$ given by $\nu:=\sum_{n=1}^\infty 2^{-n}\delta_{n^{-1}}$, where $\delta_{n^{-1}}$ denotes the Dirac measure on~$\RR$ concentrated on $\{n^{-1}\}$ for each $n\in\NN$.
Clearly, $\nu$ is representing for the linear functional given by $f\mapsto\int f\dd\nu$ and its support
$$
\supp(\nu)=\overline{\{n^{-1}:n\in\NN\}}=\{n^{-1}:n\in\NN\}\cup\{0\}
$$
is countable and compact, but $\nu(\{0\})=0$.
\end{exmp}

\noindent
However, if there exists $\lambda\in(0,1]$ such that $\nu_L(\{\alpha\})\geq\lambda>0$ for all $\alpha\in K$, then~$K$ is finite (with $\left|K\right|\leq\lambda^{-1}$) and the converse is also true as the singletons are closed w.r.t.\ $\tau_{X(A)}$.
Therefore, \thref{thm::thm} yields the following two results.

\begin{cor}
\thlabel{thm::thm::cor1}
Let $L\colon A\to\RR$ be represented by the Radon measure $\nu_L$ with compact support~$K$. If for each $\alpha\in K$ there exists $C_\alpha\in (0,1]$ and $d_\alpha\in\NN$ such that $C_\alpha p_L\leq p_{d_\alpha}$ on $[\alpha]_K$, then $K$ is countable.
%Indeed, assume for a contradiction that $K$ is uncountable.
%For each $\alpha\in K$, since $C_\alpha p_L\leq p_\alpha$ on $[\alpha]_K$, \thref{thm::thm} implies that $\nu_L(\{\alpha\})\geq C_\alpha^{2^{d_\alpha}}>0$. Therefore, Dirichlet's box principle yields that there exists $\varepsilon>0$ and $U\subseteq K$ uncountable such that $\nu_L(\{\alpha\})\geq\varepsilon$ for all $\alpha\in U$. This contradicts the finiteness of $\nu_L$ and hence, $K$ is countable.
\end{cor}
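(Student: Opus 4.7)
The plan is to deduce from the hypothesis, via \thref{thm::thm}, that every point of~$K$ carries strictly positive $\nu_L$--mass, and then to invoke the classical fact that a finite measure has at most countably many atoms.

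For the first step, I fix $\alpha\in K$ and pick $C_\alpha\in(0,1]$ and $d_\alpha\in\NN$ provided by the hypothesis so that $C_\alpha p_L\leq p_{d_\alpha}$ on~$[\alpha]_K$. Setting $\lambda_\alpha:=C_\alpha^{2^{d_\alpha}}\in(0,1]$, I observe that $\sqrt[2^{d_\alpha}]{\lambda_\alpha}\,p_L=C_\alpha p_L\leq p_{d_\alpha}$ on~$[\alpha]_K$, so $\lambda_\alpha$ lies in the set whose maximum is computed by \thref{thm::thm} taken with $d=d_\alpha$. It follows that $\nu_L(\{\alpha\})\geq\lambda_\alpha>0$.

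For the second step, I decompose $K=\bigcup_{n\in\NN}K_n$ with $K_n:=\{\alpha\in K:\nu_L(\{\alpha\})>1/n\}$ and argue that each $K_n$ is finite. Indeed, if some $K_n$ contained infinitely many distinct points $\alpha_1,\alpha_2,\ldots$, then $\sigma$--additivity of~$\nu_L$ on the pairwise disjoint Borel sets $\{\alpha_k\}$ (singletons are closed in the Hausdorff topology $\tau_{X(A)}$) would yield
\[
1=L(1)=\nu_L(K)\geq\sum_{k=1}^\infty\nu_L(\{\alpha_k\})\geq\sum_{k=1}^\infty\tfrac{1}{n}=\infty,
\]
a contradiction. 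Hence $K=\bigcup_{n\in\NN}K_n$ is a countable union of finite sets, and therefore countable.

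I do not foresee any genuine obstacle: the corollary is essentially a repackaging of \thref{thm::thm} together with the standard fact that a finite measure admits at most countably many atoms. The only subtlety worth flagging is the exponent bookkeeping, namely that the hypothesized inequality $C_\alpha p_L\leq p_{d_\alpha}$ translates, via \thref{thm::thm}, into the mass lower bound $C_\alpha^{2^{d_\alpha}}$ rather than~$C_\alpha$; this is precisely why the restriction $C_\alpha\in(0,1]$ is imposed in the hypothesis, ensuring that $\lambda_\alpha$ remains in~$[0,1]$ and is therefore admissible for the maximum in \thref{thm::thm}.
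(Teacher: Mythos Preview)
Your proof is correct and follows exactly the approach the paper intends: the paper states just before this corollary that ``If $\nu_L(\{\alpha\})>0$ for all $\alpha\in K$, then $K$ is countable as $\nu_L(K)=L(1)=1$'' and then records the corollary as a direct consequence of \thref{thm::thm}. You have simply written out the details of that deduction, including the exponent bookkeeping $\lambda_\alpha=C_\alpha^{2^{d_\alpha}}$ needed to match the formula in \thref{thm::thm}.
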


\begin{cor}\thlabel{thm::thm::cor2}
Let $L\colon A\to\RR$ be represented by the Radon measure $\nu_L$ with compact support~$K$ and let $d\in\NN$. 
Then $K$ is finite if and only if there exists $C\in (0,1]$ such that $C p_L\leq p_d$.
\noindent
In this case, $K=\sp(p_d)$ and $\left|K\right|\leq C^{-2^d}$.
\end{cor}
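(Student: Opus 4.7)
The plan is to derive both implications directly from \thref{thm::thm} by exploiting the fact that the uniform constant in $Cp_L \leq p_d$ translates into a uniform lower bound on the mass of singletons in $K$, and vice versa. The bridge is the decomposition $A=\bigcup_{\alpha\in K}[\alpha]_K$: a global inequality automatically restricts to each piece, while a family of inequalities on the pieces can be assembled back into a global one provided the constants are uniformly bounded below.

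For the ``if'' direction, suppose $Cp_L\leq p_d$ holds on $A$ for some $C\in(0,1]$. Then on every slice $[\alpha]_K$ we have $\sqrt[2^d]{\lambda}\,p_L\leq p_d$ with $\lambda=C^{2^d}\in(0,1]$, and \thref{thm::thm} gives $\nu_L(\{\alpha\})\geq C^{2^d}$ for every $\alpha\in K$. Summing the disjoint singleton contributions against $\nu_L(K)=L(1)=1$ yields
\[
|K|\cdot C^{2^d}\leq \sum_{\alpha\in K}\nu_L(\{\alpha\})\leq 1,
\]
so $K$ is finite with $|K|\leq C^{-2^d}$.

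For the ``only if'' direction, assume $K$ is finite. Since $\tau_{X(A)}$ is Hausdorff each singleton $\{\alpha\}\subseteq K$ is closed, and the minimality of the support forces $\nu_L(\{\alpha\})>0$ for every $\alpha\in K$ (otherwise $K\setminus\{\alpha\}$ would be a strictly smaller closed set of full measure). Finiteness of $K$ then ensures $\lambda_0:=\min_{\alpha\in K}\nu_L(\{\alpha\})\in(0,1]$. Applying \thref{thm::thm} with $\lambda=\lambda_0$ gives $\sqrt[2^d]{\lambda_0}\,p_L\leq p_d$ on each $[\alpha]_K$, and since these slices cover $A$, we obtain $C\,p_L\leq p_d$ on all of $A$ with $C:=\sqrt[2^d]{\lambda_0}$.

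It remains to identify $\sp(p_d)$ with $K$. By \thref{thm::main-thm::rem1}-\eqref{rem::sup-power-generators::1} we have $p_d\leq p_L$, and combined with the inequality $Cp_L\leq p_d$ this means $p_d$ and $p_L$ are equivalent seminorms; hence their Gelfand spectra coincide. By \thref{char::compact::support}, $\sp(p_L)=K$, so $\sp(p_d)=K$. I do not anticipate a serious obstacle: the only delicate point is the classical observation that in a Hausdorff space the support of a Radon measure assigns positive mass to every isolated atom of its support, which is what makes the constant $\lambda_0$ strictly positive.
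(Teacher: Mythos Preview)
Your proof is correct and follows essentially the same route as the paper: the equivalence is read off from \thref{thm::thm} via the observation (stated just before the corollary) that a uniform lower bound on $\nu_L(\{\alpha\})$ is equivalent to finiteness of $K$, and the covering $A=\bigcup_{\alpha\in K}[\alpha]_K$ converts between global and slice-wise inequalities. The identification $\sp(p_d)=K$ is likewise the paper's argument, obtained from $Cp_L\leq p_d\leq p_L$ and \thref{char::compact::support}.
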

\begin{proof}
Note that $K=\sp(p_L)=\sp(p_d)$ by \thref{char::compact::support} as $Cp_L\leq p_d\leq p_L$.
\end{proof}

\noindent
Often one is interested in constructing a representing Radon measure whose support is contained in the topological dual $V^\prime$ of a lc space $(V, \tau)$ rather than in $X(A)$ (see, e.g., \cite{BeSi71}, \cite[Vol. II, Chapter 5, Sect. 2]{BeKo95}, \cite{InKuRo14}, \cite{Schmu18}). 
\noindent
Suppose that $V$ is a subspace of $A$ and there exists a representing Radon measure~$\nu$ on $X(A)$ for a $\sum A^2$--positive linear functional $L\colon A\to\RR$. 
Then the natural embedding of $V$ in $A$ extends to a homomorphism $\phi\colon S(V)\to A$. 
Since $X(S(V))$ is isomorphic to the algebraic dual $V^\ast$ of $V$, the dual map of $\phi$ actually gives a map $\phi^\prime\colon X(A)\to V^\ast$ (see, e.g., \cite[p.10]{GhInKuMa18}. 
If we endow $V^\ast$ with the weak topology, then $\phi^\prime$ is continuous and so $\nu^\prime:={\phi^\prime}_{\#}\nu$ is a representing Radon measure on $V^\ast$ for $L\circ\phi$.
If the support of $\nu$ is compact in $X(A)$ and $p_L$ is $\tau$--continuous, then \thref{char::compact} ensures that $\supp(\nu)=\sp(p_L)$ and $\phi^\prime(\supp(\nu))$ is compact in $V^\prime$ with $\nu^\prime(\phi^\prime(\supp(\nu))=1$.

The previous observations motivated us to investigate more deeply in the next section the case when $A$ is endowed with some topology compatible with the algebra structure.

\section{Topological aspects}\label{sec::topology}

Let $\tau$ be a locally convex topology on the algebra $A$, i.e., $\tau$ is generated by a family of seminorms on $A$. 
The pair $(A,\tau)$ is called a \emph{locally convex topological algebra (lc TA)} if the multiplication in $A$ is separately continuous and an \emph{lc TA with continuous multiplication} if the multiplication is jointly continuous. The pair $(A,\tau)$ is called a \emph{locally multiplicative convex algebra (lmc TA)} if $\tau$ is generated by a family of submultiplicative seminorms on $A$. In this case the multiplication is automatically jointly continuous and so each lmc TA is an lc TA with continuous multiplication (see, e.g., \cite{Mal86, Tre67} for details).

\subsection{Some natural topologies related to the growth condition}\label{sec::natural}

In the following, we construct and characterize two topologies on $A$ closely related to the growth condition~\eqref{thm::MP<->growth::cond}.\par\medskip

\noindent
Let us fix a $\sum A^2$--positive linear functional $L\colon A\to\RR$.
Consider the to\-po\-logy $\tau_\P$ generated by the family $\P:=\{p_d:d\in\NN\}$ of seminorms on $A$. 
Note that $\tau_{\P}$ is Hausdorff if and only if $p_1$ is a norm (see~\thref{thm::main-thm::rem1}-\eqref{rem::sup-power-generators::0}).

\begin{prop}
\thlabel{prop::can-top1}
The topology $\tau_{\P}$ is the weakest topology on $A$ such that~$(A,\tau_{\P})$ is an lc TA with continuous multiplication and $L$ is $\tau_{\P}$--continuous.
\end{prop}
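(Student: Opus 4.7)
The plan is to split the proof into two parts. First I will verify that $(A,\tau_\P)$ itself is an lc TA with jointly continuous multiplication on which $L$ is continuous. Second, I will show that any locally convex topology $\tau$ on $A$ with these properties dominates $\tau_\P$.

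For the first part, local convexity of $\tau_\P$ is built into its definition as the topology generated by the family $\P$ of seminorms. Continuity of $L$ is immediate from \eqref{eq::CBS}: $|L(a)| \leq \sqrt{L(a^2)} = p_1(a)$. Joint continuity of multiplication is read off directly from \thref{thm::main-thm::rem1}-\eqref{rem::sup-power-generators::0}, which records the bound $p_d(ab) \leq p_{d+1}(a)\,p_{d+1}(b)$ for every $d\in\NN$; this bounds $p_d$ on a product by the product of $p_{d+1}$'s on the factors, so the multiplication $A\times A\to A$ is continuous at $(0,0)$ in each pair of generating seminorms, hence jointly continuous on the lc space $(A,\tau_\P)$.

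For the minimality statement, let $\tau$ be a locally convex topology on $A$ under which multiplication is jointly continuous and $L$ is $\tau$-continuous. It suffices to prove that every $p_d$ is $\tau$-continuous, for then $\tau_\P\subseteq\tau$. I would first pick a $\tau$-continuous seminorm $q_0$ with $|L(\cdot)|\leq q_0(\cdot)$. Joint continuity of multiplication gives, for any $\tau$-continuous seminorm $q$, a $\tau$-continuous seminorm $\tilde q$ with $q(ab)\leq\tilde q(a)\tilde q(b)$ (take the sum of the two candidate seminorms produced by continuity of multiplication). Iterating yields a chain $q_0\leq q_1\leq q_2\leq\cdots$ of $\tau$-continuous seminorms with $q_{k-1}(ab)\leq q_k(a)\,q_k(b)$. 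A short induction on $d$ (writing $a^{2^{d+1}}=(a^{2^d})^2$ and peeling off one factor at each level) then produces $q_0(a^{2^d})\leq q_d(a)^{2^d}$. Since $a^{2^d}$ is a square and $L$ is $\sum A^2$-positive, $0\leq L(a^{2^d})\leq q_0(a^{2^d})\leq q_d(a)^{2^d}$, hence $p_d(a)\leq q_d(a)$, so $p_d$ is $\tau$-continuous.

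The main obstacle I anticipate is this iteration step, which hinges on the standard reformulation that joint continuity of multiplication in a locally convex algebra is equivalent to the existence, for every continuous seminorm $q$, of a continuous seminorm $\tilde q$ dominating $(a,b)\mapsto q(ab)$ in product form. Once this reformulation and the bookkeeping of the $q_k$'s are in place, the induction on $d$ and the final comparison with $p_d$ are routine.
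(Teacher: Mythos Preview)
Your proof is correct and follows essentially the same approach as the paper's. Both parts match: the paper also uses \eqref{eq::CBS} for continuity of $L$ and the bound $p_d(ab)\leq p_{d+1}(a)p_{d+1}(b)$ for joint continuity; for minimality, the paper likewise picks a $\tau$-continuous $q$ with $|L|\leq q$ and then invokes joint continuity to produce, for each $d$, a $\tau$-continuous seminorm $r$ with $q(a^{2^d})\leq r(a)^{2^d}$---your explicit chain $q_0\leq q_1\leq\cdots$ and induction simply spell out what the paper compresses into one sentence (with a reference to \cite[p.~420]{Tre67}).
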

\begin{proof}
By \eqref{eq::CBS}, the linear functional $L$ is $p_1$--continuous and so $L$ is $\tau_\P$--con\-tinuous. 
Since for each $d\in\NN$ we have that $p_d(a\cdot b)\leq p_{d+1}(a)p_{d+1}(b)$ for all~$a,b\in A$, the multiplication is jointly continuous (cf.\ \cite[p.~420]{Tre67}).

Let $(A,\tau)$ be an lc TA with continuous multiplication and $L$ be $\tau$--continuous. 
Then there exists a $\tau$--continuous seminorm~$q$ on~$A$ such that $|L |\leq q$. 
As the multiplication is jointly continuous, for each $d\in\NN$, there exists a $\tau$--continuous seminorm~$r$ on~$A$ such that $ q(a^{2^d})\leq r(a)^{2^d}$ for all $a\in A$, i.e., $p_d\leq r$. Hence, each seminorm in $\P$ is $\tau$--continuous and so $\tau_\P\subseteq\tau$. % by \cite[Proposition~7.7]{Tre67}.
\end{proof}

\noindent
In case the growth condition~\eqref{thm::MP<->growth::cond} holds, $L$ is also continuous w.r.t.\ the submultiplicative seminorm~$p_L$ on $A$ (see \thref{lem::growth->subm}). 
Consider the topology $\tau_L$ generated by $p_L$ and note that $\tau_L$ is Hausdorff if and only if $p_1$ is a norm as $p_L=\sup_{d\in\NN}p_d$.

\begin{prop}
\thlabel{prop::can-top2}
The topology $\tau_L$ is the weakest topology on $A$ such that~$(A,\tau_L)$ is an lmc TA and $L$ is $\tau_L$--continuous.
\end{prop}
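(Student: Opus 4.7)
The plan is to establish the proposition by proving two assertions in parallel with the proof of \thref{prop::can-top1}, exploiting the fact that here the generating seminorm $p_L$ is itself submultiplicative, which makes the argument essentially cleaner. The first assertion is that $(A,\tau_L)$ is an lmc TA with $L$ continuous; the second is that $\tau_L$ is contained in any lmc topology $\tau$ on $A$ for which $L$ is $\tau$--continuous.

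For the first assertion, I would simply invoke \thref{lem::growth->subm}: this lemma states that $p_L$ is a submultiplicative seminorm and that $L$ is $p_L$--continuous. Since $\tau_L$ is generated by the single submultiplicative seminorm $p_L$, it is by definition locally multiplicatively convex, and $L$ is $\tau_L$--continuous by construction.

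For the second assertion, let $(A,\tau)$ be an lmc TA on which $L$ is $\tau$--continuous. I would fix a directed family of submultiplicative seminorms generating $\tau$; by continuity of $L$ there exist a seminorm $q$ in this family and a constant $C>0$ with $|L(a)|\leq C q(a)$ for all $a\in A$. The key step is to combine submultiplicativity of $q$ with the $\sum A^2$--positivity of $L$ (which gives $L(a^{2n})\geq 0$) in order to deduce, for each $a\in A$ and each $n\in\NN$,
\[
0\leq L(a^{2n})\leq C\,q(a^{2n})\leq C\,q(a)^{2n},
\]
and hence $\sqrt[2n]{L(a^{2n})}\leq C^{1/(2n)}q(a)$. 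Passing to the limit $n\to\infty$ via \thref{thm::main-thm::rem1}-\eqref{rem::sup-power-generators::1} makes the constant $C^{1/(2n)}$ collapse to $1$, yielding the sharp inequality $p_L\leq q$. Since $q$ is $\tau$--continuous, so is $p_L$, and therefore $\tau_L\subseteq\tau$.

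The only point requiring care is this passage from the constant-laden bound $|L|\leq C q$ to the cleaner $p_L\leq q$ without a multiplicative constant; this is precisely where the asymptotic character of the growth condition (the fact that the supremum in the definition of $p_L$ equals a limit) is indispensable, since the $2n$-th root absorbs the constant $C$ in the limit. Once this is in hand, the comparison of topologies is immediate.
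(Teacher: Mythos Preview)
Your proof is correct and follows essentially the same route as the paper: invoke \thref{lem::growth->subm} for the first assertion, and for the second pick a $\tau$--continuous submultiplicative seminorm $q$ dominating $L$, bound $L(a^{2n})$ by $q(a)^{2n}$, and conclude $p_L\leq q$. The only cosmetic difference is that the paper absorbs the constant $C$ up front via \eqref{lem::growth->subm::rem1::eq1} (which already encodes the same root-taking trick), whereas you carry $C$ through and kill it explicitly in the limit; both arguments are the same in substance.
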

\begin{proof}
Let $(A,\tau)$ be an lmc TA and $L$ be $\tau$--continuous. 
Then there exists a $\tau$--continuous submultiplicative seminorm~$q$ on~$A$ such that $L(a^{2^d})\leq q(a^{2^d})\leq q(a)^{2^d}$ for all $d\in\NN$ and all $a\in A$, i.e., $p_L=\sup_{d\in\NN}p_d\leq q$.
Hence, $p_L$ is $\tau$--continuous and so~$\tau_L\subseteq\tau$.% by \cite[Proposition~7.7]{Tre67}.
\end{proof}

\noindent
Since the lmc TA $(A,\tau_L)$ is also an lc TA with continuous multiplication, $\tau_\P\subseteq\tau_L$ by \thref{prop::can-top1}. 
Therefore, by \thref{prop::can-top2}, the converse inclusion $\tau_L\subseteq \tau_\P$ holds if and only if $(A,\tau_\P)$ is an lmc TA. 
Note that by~\thref{thm::thm::cor2} and \cite[Proposition~7.7]{Tre67} the case $\tau_\P=\tau_L$ is equivalent to the existence of a representing Radon measure for~$L$ with \emph{finite} support.

\subsection{Generators}\label{sec::generators}

In the following, we investigate the consequences of assuming the growth condition \eqref{thm::MP<->growth::cond} not on all elements of $A$ but just on a proper subset of~$A$, e.g., the generators of $A$ or the generators of a dense subalgebra of $A$ when~$A$ is a topological algebra.\par\medskip

\noindent
Let us fix a $\sum A^2$--positive linear functional $L\colon A\to\RR$.

\begin{prop}
\thlabel{prop::gen1}
Let $A$ be generated by~$\{a_i:i\in I\}$ such that $\sup_{d\in\NN}p_d(a_i)<\infty$ for all~$i\in I$.
Then~\eqref{thm::MP<->growth::cond} holds.
\end{prop}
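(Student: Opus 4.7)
The plan is to introduce the set $B := \{a \in A : \sup_{d \in \NN} p_d(a) < \infty\}$ and show that it is a unital subalgebra of $A$. Since $B$ will contain all generators $a_i$ by hypothesis, this forces $B = A$, and then the growth condition \eqref{thm::MP<->growth::cond} for every $a \in A$ follows because $\sup_{n \in \NN} \sqrt[2n]{L(a^{2n})} = \sup_{d \in \NN} p_d(a)$ by \thref{thm::main-thm::rem1}-\eqref{rem::sup-power-generators::1}.

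First I would verify that $1 \in B$, which is immediate since $p_d(1) = \sqrt[2^d]{L(1)} = 1$ for all $d \in \NN$. Closure under scalar multiplication and under addition follows directly from the fact, already established in \thref{thm::main-thm::rem1}-\eqref{rem::sup-power-generators::0}, that each $p_d$ is a seminorm: homogeneity gives $\sup_d p_d(\lambda a) = \left|\lambda\right| \sup_d p_d(a)$, while the triangle inequality gives $\sup_d p_d(a+b) \leq \sup_d p_d(a) + \sup_d p_d(b)$, both of which are finite whenever $a,b\in B$.

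The crux is closure under products, and here I would invoke the cross-submultiplicativity relation $p_d(ab) \leq p_{d+1}(a) p_{d+1}(b)$ from \thref{thm::main-thm::rem1}-\eqref{rem::sup-power-generators::0}. Taking suprema on both sides yields
\[
\sup_{d \in \NN} p_d(ab) \leq \Bigl(\sup_{d \in \NN} p_{d+1}(a)\Bigr) \Bigl(\sup_{d \in \NN} p_{d+1}(b)\Bigr) \leq \Bigl(\sup_{d \in \NN} p_d(a)\Bigr) \Bigl(\sup_{d \in \NN} p_d(b)\Bigr),
\]
so that $ab \in B$ whenever $a,b \in B$. Hence $B$ is a unital subalgebra of $A$ containing $\{a_i : i \in I\}$ by assumption, and the generating property forces $B = A$.

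There is no serious obstacle: the argument is essentially a bookkeeping exercise in which closure of $B$ under the algebra operations is reduced to already-established properties of the seminorms $p_d$, with the cross-submultiplicativity step being what makes the shift from index $d$ to $d+1$ harmless when passing to the supremum.
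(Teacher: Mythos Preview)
Your proof is correct and follows essentially the same approach as the paper: both use the seminorm properties and the cross-submultiplicativity $p_d(ab)\leq p_{d+1}(a)p_{d+1}(b)$ from \thref{thm::main-thm::rem1}-\eqref{rem::sup-power-generators::0} to propagate finiteness of $\sup_d p_d$ from the generators to all of $A$, and then conclude via $\sup_{n\in\NN}\sqrt[2n]{L(a^{2n})}=\sup_{d\in\NN}p_d(a)$. The only difference is presentational---you make the subalgebra $B$ explicit, whereas the paper packages the same steps into the single inequality $p_d(\lambda a+bc)\leq |\lambda|p_d(a)+p_{d+1}(b)p_{d+1}(c)$.
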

\begin{proof}
Since $p_d(\lambda a+bc)\leq \left|\lambda\right|p_d(a)+p_{d+1}(b)p_{d+1}(c)$ for all $\lambda\in\RR$ and all $a,b,c,\in A$ by~\thref{thm::main-thm::rem1}-\eqref{rem::sup-power-generators::0}, we easily get that $\sup_{d\in\NN}p_d(a)<\infty$ for all $a\in A$. This yields the assertion as $\sup_{n\in\NN}\sqrt[2n]{L(a^{2n})}=\sup_{d\in\NN}\sqrt[2^d]{L(a^{2^d})}$ for all $a\in A$.
\end{proof}

\noindent
Combining \thref{prop::gen1,char::compact} yields the following result.

\begin{cor}
\thlabel{cor::gen1}
Let $A$ be generated by~$\{a_i:i\in I\}$ such that $\sup_{d\in\NN}p_d(a_i)<\infty$ for all~$i\in I$.
Then~$\nu_L$ is the unique representing Radon measure for~$L$ with compact support. In particular, 
$$
\supp(\nu_L)\subseteq\{\alpha\in X(A):\left|\alpha(a_i)\right| \leq p_L(a_i)\text{ for all } i \in I \}.
$$
\end{cor}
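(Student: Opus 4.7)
My plan is to chain together the two main tools already established earlier: \thref{prop::gen1}, which bootstraps the growth condition from generators to all of $A$, and \thref{char::compact}, which converts the growth condition into the desired measure-theoretic conclusion. The support containment will then be essentially tautological from the description $\supp(\nu_L)=K_L$ proved in \thref{char::compact::support}.

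First, I would invoke \thref{prop::gen1}: since $\sup_{d\in\NN}p_d(a_i)<\infty$ for every generator $a_i$, the identity $\sup_{n\in\NN}\sqrt[2n]{L(a^{2n})}=\sup_{d\in\NN}p_d(a)$ (from \thref{thm::main-thm::rem1}-\eqref{rem::sup-power-generators::1}) combined with the subadditive/submultiplicative behavior of the $p_d$ on sums and products gives that the growth condition \eqref{thm::MP<->growth::cond} holds on all of $A$. This is exactly the conclusion of \thref{prop::gen1}, so the hypothesis \eqref{char::compact::2} of \thref{char::compact} is verified.

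Second, applying the equivalence \eqref{char::compact::2}$\Leftrightarrow$\eqref{char::compact::1} in \thref{char::compact}, there exists a unique representing Radon measure $\nu_L$ for $L$ with compact support. Moreover, by \eqref{loc-supp} in \thref{char::compact}, we have the exact identification
\[
\supp(\nu_L)=K_L=\{\alpha\in X(A):\left|\alpha(a)\right|\leq p_L(a)\text{ for all }a\in A\}.
\]

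Finally, for the ``in particular'' statement, I simply observe that weakening the universal quantifier from $a\in A$ to $a_i$ with $i\in I$ can only enlarge the defining set, so
\[
\supp(\nu_L)=K_L\subseteq\{\alpha\in X(A):\left|\alpha(a_i)\right|\leq p_L(a_i)\text{ for all }i\in I\},
\]
which is the desired inclusion. There is no real obstacle here; the corollary is a direct specialization of the previously established machinery, and the main point of interest is conceptual, namely that verifying the growth condition only on a generating set is already enough to force the existence of a compactly supported representing measure whose support is constrained by the values of $p_L$ on the generators.
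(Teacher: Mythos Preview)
Your proposal is correct and follows essentially the same approach as the paper, which simply states that the result follows by combining \thref{prop::gen1} and \thref{char::compact}. Your additional explicit justification of the support inclusion via $\supp(\nu_L)=K_L$ and the trivial enlargement obtained by restricting the quantifier to generators is exactly the intended reading of that combination.
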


\begin{exmp}
Consider the polynomial algebra $\RR[\uX]:=\RR[X_1,\ldots,X_m]$ for some $m\in\NN$ and let $L\colon \RR[\uX]\to\RR$ be a $\sum\RR[\uX]^2$--positive linear functional such that $\sup_{d\in\NN}p_d(X_i)<\infty$ for all $i\in\{1,\ldots,m\}$. 
Then~$\nu_L$ is the unique representing Radon measure for~$L$ on~$X(\RR[\uX])\simeq\RR^m$ with compact support by \thref{cor::gen1} and, in particular,
$$
\supp(\nu_L)\subseteq\prod_{i=1}^m[-p_L(X_i),p_L(X_i)]\subseteq\RR^m.
$$
Note that further aspects of the problem of bounding the (not necessarily compact) support of a Radon measure on~$\RR^m$ by a closed box are considered in~\cite{Las11}.
\end{exmp}

\noindent
Now, let us fix an lc TA $(A,\tau)$ such that~$L$ is $\tau$--continuous.  Note that here the multiplication in $A$ is not assumed to be jointly continuous. We investigate what can be said when we assume the growth condition only on a dense subalgebra.

\begin{prop}
\thlabel{prop::dense-sub}
Let $B\subseteq A$ be a $\tau$--dense subalgebra such that $\sup_{d\in\NN}p_d(b)<\infty$ for all $b\in B$ and $\tau_{L\!\restriction_B}\subseteq \tau\!\restriction_B$. Then~\eqref{thm::MP<->growth::cond} holds.
\end{prop}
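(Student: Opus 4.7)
The plan is to extend the seminorm $p_{L\!\restriction_B}$ from $B$ to a $\tau$-continuous submultiplicative seminorm $\tilde p$ on all of $A$ satisfying $|L|\leq \tilde p$, and then to invoke \thref{lem::subm->growth} to obtain the growth condition.

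First I would construct $\tilde p$. The assumption $\sup_{d\in\NN}p_d(b)<\infty$ for all $b\in B$ guarantees, by \thref{lem::growth->subm} applied to $L\!\restriction_B$, that $p_{L\!\restriction_B}=p_L\!\restriction_B$ is a well-defined submultiplicative seminorm on $B$, and the hypothesis $\tau_{L\!\restriction_B}\subseteq\tau\!\restriction_B$ renders it $\tau\!\restriction_B$-continuous. Since a continuous seminorm on a topological vector space is automatically uniformly continuous with respect to the canonical additive uniformity, and since $B$ is $\tau$-dense in $A$ while $\RR$ is complete, $p_L\!\restriction_B$ admits a unique $\tau$-continuous extension $\tilde p\colon A\to[0,\infty)$. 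Continuity of the vector space operations of $(A,\tau)$ combined with density of $B$ then ensures that $\tilde p$ inherits the seminorm axioms from $p_L\!\restriction_B$.

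The main obstacle is to verify that $\tilde p$ is submultiplicative on $A$. This step is subtle because $(A,\tau)$ is only an lc TA with \emph{separately} continuous multiplication, so the map $a\mapsto a^2$ need not be $\tau$-continuous and one cannot directly extend the inequality $\tilde p(b_ib_i')\leq\tilde p(b_i)\tilde p(b_i')$ along a diagonal net $b_i\to a$, $b_i'\to a'$ in $\tau$. I would circumvent this by exploiting separate continuity one factor at a time. For each fixed $b'\in B$, the map $a\mapsto ab'$ is $\tau$-continuous, whence $a\mapsto\tilde p(ab')$ is $\tau$-continuous; the inequality $\tilde p(bb')=p_L(bb')\leq p_L(b)p_L(b')=\tilde p(b)\tilde p(b')$ valid on $B\times B$ then extends by density to $\tilde p(ab')\leq\tilde p(a)\tilde p(b')$ for all $a\in A$ and $b'\in B$. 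A second, analogous application of separate continuity, fixing $a\in A$ and approximating $b\in A$ by a net in $B$, yields $\tilde p(ab)\leq\tilde p(a)\tilde p(b)$ for all $a,b\in A$.

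Finally, on $B$ we have $|L(b)|\leq\sqrt{L(b^2)}=p_1(b)\leq p_L(b)=\tilde p(b)$ by~\eqref{eq::CBS}, and density of $B$ together with the $\tau$-continuity of $L$ and $\tilde p$ lifts this estimate to $|L(a)|\leq\tilde p(a)$ for all $a\in A$. Hence $L$ is continuous with respect to the submultiplicative seminorm $\tilde p$ on $A$, and \thref{lem::subm->growth} gives $\sup_{n\in\NN}\sqrt[2n]{L(a^{2n})}\leq\tilde p(a)<\infty$ for every $a\in A$, which is precisely~\eqref{thm::MP<->growth::cond}.
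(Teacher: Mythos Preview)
Your proof is correct and follows essentially the same strategy as the paper: extend $p_{L\!\restriction_B}$ to a $\tau$-continuous seminorm on $A$, verify submultiplicativity by exploiting separate continuity of the multiplication one factor at a time, and then invoke \thref{lem::subm->growth}. The only cosmetic differences are that the paper phrases the extension step as an application of Hahn--Banach and secures $|L|\leq p$ by replacing $p$ with $\max\{p,|L|\}$, whereas you obtain both the extension and the bound $|L|\leq\tilde p$ directly via uniqueness of the continuous extension from the dense subalgebra; your two-step density argument for submultiplicativity is exactly the paper's explicit $\varepsilon$-computation rewritten in net language.
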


\begin{proof}
Recall that $\tau_{L\!\restriction_B}$ denotes the topology on~$B$ generated by the submultiplicative seminorm~$p_{L\!\restriction_B}$ that is induced by $L\!\restriction_B\colon B\to\RR$ (cf.\ \thref{lem::growth->subm}). Further, $p_{L\!\restriction_B}$ is $\tau\!\restriction_B$--continuous as $\tau_{L\!\restriction_B}\subseteq \tau\!\restriction_B$. Therefore, by the Hahn--Banach theorem, $p_{L\!\restriction_B}$ extends to a $\tau$--continuous seminorm~$p$ on~$A$.
W.l.o.g.\ we can assume that $L$ is $p$--continuous (otherwise replace $p$ by $\max\{p, |L|\}$). By \thref{lem::subm->growth} it suffices to show that~$p$ is submultiplicative.

Let $a_1,a_2\in A$ and $\varepsilon>0$.
Since the multiplication is separately continuous, there exists a $\tau$--continuous seminorm $q_1$ on $A$ such that $p(a\cdot a_1)\leq q_1(a)$ for all $a\in A$.
Then the density of $B$ in $(A, \tau)$ implies that there exists $b_2\in B$ such that $\max\{p,q_1\}(a_2-b_2)\leq\varepsilon$ and so
$$
\left|p(a_1a_2)-p(a_1b_2)\right|\leq p(a_1(a_2-b_2))\leq q_1(a_2-b_2)\leq\varepsilon.
$$
Similarly, there exists a $\tau$--continuous seminorm $q_2$ on $A$ such that $p(a\cdot b_2)\leq q_2(a)$ for all $a\in A$ as well as $b_1\in B$ such that $\max\{p,q_2\}(a_1-b_1)\leq\varepsilon$. As above, $\left|p(a_1b_2)-p(b_1b_2)\right|\leq\varepsilon$.
Since $p\!\restriction_B=p_{L\!\restriction_B}$ and $p(a_i-b_i)\leq\varepsilon$ for $i\in\{1,2\}$, 
$$
p(a_1a_2)\leq p(b_1b_2) + 2\varepsilon\leq p(b_1)p(b_2) + 2\varepsilon\leq (p(a_1)+\varepsilon)(p(a_2)+\varepsilon)+ 2\varepsilon.
$$
Hence, $p$ is submultiplicative as $\varepsilon>0$ was arbitrary.
\end{proof}

\noindent
Combining \thref{prop::gen1,prop::dense-sub} yields the following generalization of \thref{cor::gen1}.

\begin{cor}
\thlabel{thm-gen}
Let $B\subseteq A$ be a $\tau$--dense subalgebra generated by~$\{b_i:i\in I\}$ such that $\sup_{d \in\NN}p_d(b_i)<\infty$ for all $i\in I$ and $\tau_{L\!\restriction_B}\subseteq \tau\!\restriction_B$.
Then~$\nu_L$ is the unique representing Radon measure for~$L$ with compact support. In particular, 
$$
\supp(\nu_L)\subseteq\{\alpha\in X(A):\left|\alpha(b_i)\right| \leq p_L(b_i)\text{ for all } i \in I \}.
$$
\end{cor}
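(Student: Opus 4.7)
The plan is to obtain \thref{thm-gen} by first strengthening the hypothesis on the generators $\{b_i:i\in I\}$ to a growth estimate on the entire subalgebra $B$, then lifting that estimate from $B$ to $A$ via density, and finally invoking the main existence result \thref{thm::MP<->growth} together with the support description from \thref{char::compact}.

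First I would restrict $L$ to the $\tau$--dense subalgebra $B$. Since $L$ is $\sum A^2$--positive, its restriction $L\!\restriction_B$ is $\sum B^2$--positive, and the seminorms $p_d$ associated with $L\!\restriction_B$ on $B$ are exactly the restrictions $p_d\!\restriction_B$ of the seminorms associated with $L$ on $A$ (cf.\ \thref{thm::main-thm::rem1}-\eqref{rem::sup-power-generators::0}). Hence the hypothesis $\sup_{d\in\NN}p_d(b_i)<\infty$ for every generator $b_i$ is precisely the assumption required to apply \thref{prop::gen1} to $L\!\restriction_B$; this yields
$$
\sup_{n\in\NN}\sqrt[2n]{L(b^{2n})}=\sup_{n\in\NN}\sqrt[2n]{L\!\restriction_B(b^{2n})}<\infty\qquad\text{for all }b\in B,
$$
which in particular makes the submultiplicative seminorm $p_{L\!\restriction_B}$ and the topology $\tau_{L\!\restriction_B}$ well-defined on $B$.

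Next I would invoke \thref{prop::dense-sub}: the remaining hypothesis $\tau_{L\!\restriction_B}\subseteq\tau\!\restriction_B$ combined with the growth estimate just established on $B$ produces the global growth condition
$$
\sup_{n\in\NN}\sqrt[2n]{L(a^{2n})}<\infty\qquad\text{for all }a\in A.
$$
At this point \thref{thm::MP<->growth} directly provides the unique representing Radon measure $\nu_L$ for $L$ with compact support, establishing the first assertion of the corollary.

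For the support inclusion, \thref{thm::MP<->growth} (or equivalently \thref{char::compact}) identifies $\supp(\nu_L)=K_L$, so every $\alpha\in\supp(\nu_L)$ satisfies $\left|\alpha(a)\right|\leq p_L(a)$ for all $a\in A$; specializing to the generators $b_i$ gives the claimed containment. I do not expect a substantive obstacle: the argument is essentially a chaining of \thref{prop::gen1} and \thref{prop::dense-sub}, and the only point to watch is to verify that the seminorms $p_d$ intrinsic to $L\!\restriction_B$ on $B$ really coincide with the restrictions of those intrinsic to $L$ on $A$, so that \thref{prop::gen1} can legitimately be applied inside $B$ before \thref{prop::dense-sub} is used to pass back to $A$.
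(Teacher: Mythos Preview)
Your proposal is correct and follows exactly the same approach as the paper, which simply states that the corollary is obtained by combining \thref{prop::gen1} and \thref{prop::dense-sub}. You have merely filled in the details of this chaining (including the routine observation that the seminorms $p_d$ for $L\!\restriction_B$ agree with the restrictions of those for $L$), and the support inclusion follows from $\supp(\nu_L)=K_L$ exactly as you indicate.
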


\section*{Acknowledgments}
We are indebted to the Baden--W\"urttemberg Stiftung for the financial support to this work by the Eliteprogramme for Postdocs. This work was also partially supported by the Ausschuss f\"ur Forschungsfragen~(AFF) and Young Scholar Fund~(YSF) 2018 awarded by the University of Konstanz.

\end{document}